\newtheorem{theorem}{Theorem}[section]
\newtheorem{lemma}[theorem]{Lemma}
\newtheorem{proposition}[theorem]{Proposition}
\newtheorem{corollary}[theorem]{Corollary}
\theoremstyle{definition}
\newtheorem{definition}[theorem]{Definition}
\numberwithin{equation}{section}
\newcommand{\N}{\mathbb{N}}
\let\midbar\mid
\let\al\mathbf \let\ab\al
\let\var\mathcal
\let\tup\mathbf
\let\equals\approx
\DeclareMathOperator{\Clo}{Clo}
\DeclareMathOperator{\Cg}{Cg}
\DeclareMathOperator{\HSP}{\mathsf {HSP}}
\newcommand{\A}{{\al A}}
\DeclareMathOperator{\Len}{l}
\newcommand{\len}[1]{\lenop({#1})}
\newcommand{\Le}[2]{\Len_{#1}({#2})}
\DeclareMathOperator{\Ht}{h}
\newcommand{\he}[1]{\htop({#1})}
\newcommand{\He}[2]{\Ht_{#1}({#2})}
\DeclareMathOperator{\fs}{Spec}
\newcommand{\Fs}[2]{\fs_{#1}({#2})}
\newcommand{\F}[2]{\al F_{#1}({#2})}
\DeclareMathOperator{\lenop}{len}
\DeclareMathOperator{\htop}{ht}
\title[Complexity of terms]%
  {Complexity of term representations of finitary functions}
\author{Erhard Aichinger}
\address{Institut f\"ur Algebra, Johannes Kepler Universit\"at Linz,
  4040 Linz, Austria}
\email{erhard@algebra.uni-linz.ac.at}
\author{Neboj\v{s}a Mudrinski}
\address{Department of Mathematics and Informatics, Faculty of Sciences,
  University of Novi Sad,
  Trg Dositeja Obradovi\'{c}a 4, 21000 Novi Sad, Serbia}
\email{nmudrinski@dmi.uns.ac.rs}
\author{Jakub Opr\v{s}al}
\address{Institut f\"ur Algebra, Technische Universit\"at Dresden,
  01062 Dresden, Germany}
\email{jakub.oprsal@tu-dresden.de}
\thanks{Supported by the Austrian Science Fund (FWF):P29931,
  Research Grant 174018 of the Ministry of Science and Education of the Republic
  of Serbia, and
  European Research Council (Grant Agreement no. 681988, CSP-Infinity).
}
\date{\today}
\begin{document}

\begin{abstract}
  The clone of term operations of an algebraic structure consists of
  all operations that can be expressed by a term in the language of the structure.
  We consider bounds for the \emph{length} and the \emph{height} of the terms
  expressing these functions, and we show that these bounds are often robust
  against the change of the basic operations of the structure.
\end{abstract}

\maketitle

\section{Motivation}

Following \cite{BS:ACIU}, an \emph{algebraic structure}, or an algebra, is
a~pair $\al A = (A,F)$ where $A$ is a~non-empty set called the \emph{universe}
of $\al A$, and $F$ is a~set of operations on $A$. The operations in $F$ are
called the \emph{basic operations} of $\al A$.  A~\emph{language} (sometimes
called \emph{signature}, or \emph{type}) of an algebra $\al A$ is a set of
symbols, one for each basic operation, together with their arities.  An $n$-ary
\emph{term} over the variables $x_1,\dots, x_n$ is any properly formed
expression using these operation symbols and variables.  A natural measure of
the complexity of a term is its length as a string, i.e., the total number of
both operation and variable symbols it contains.
We will denote this quantity by $\len t$ and call it the \emph{length of the
term $t$}. Formally, $\len{x} = 1$ if $x$ is a variable or a nullary operation
symbol, and $\len {f(t_1,\ldots, t_k)} = 1 + \sum_{i=1}^k \len {t_i}$ if $f$ is
a $k$-ary operation symbol and $t_1,\ldots,t_k$ are terms.  A term $t$ over
$x_1, \dots, x_n$  naturally induces a~function $t^{\al A}$ from $A^n$ to $A$,
which maps a~tuple $(a_1,\dots, a_n)$ to the value obtained by substituting
$a_i$ for each $x_i$ and interpreting the operation symbols by the corresponding
operations.
Next, we measure the complexity of a~term function of a~finite algebra
$\al A$: the~\emph{length of a~term function $t^{\al A}$} is the length of the
shortest term that induces this function; formally, 
\[
  \lenop_{\al A} (t^{\al A}) = \min \{ \len s :
    s \text{ is a term in the language of $\al A$ with } s^{\al A} = t^{\al A}
  \}.
\]
From this we define a~sequence $\Len_{\al A}(n)$ whose $n$-th element expresses
the worst-case complexity of an~$n$-ary term function of $\al A$:
\[
  \Len_{\al A}(n) = \max \{ \lenop_{\al A} ( t^{\al A} ) :
    \text{$t$ is an $n$-ary term in the language of $\al A$} \}.
\]
In other words, $\Len_{\al A}(n)$ is the smallest  number of symbols that is
sufficient to write down any $n$-ary term operation of $\al A$.
In the present paper we study how the asymptotics of the sequence $\Len_{\al
A}(n)$ depends on the properties of the algebra $\al A$.
The algebras $\al A$ and $\al B$ are \emph{term equivalent} if they
are defined on the same universe and have the same term operations.
Unlike other similar sequences studied in the literature, e.g., the free
spectrum, the sequences $\Len_{\al A}$ and $\Len_{\al B}$ may differ even when
$\al A$ and $\al B$ are term equivalent.  Nevertheless, in some cases we are
able to prove that the asymptotics of these sequences does not change. This
motivates the following definition.

\begin{definition}
  Let $\al A$ be a~finite algebra of finite type. Then $\al A$ is
  \emph{resilient against change of signature}, or simply \emph{resilient} if
  for every algebra $\al B$ of finite type that is term equivalent to $\al A$,
  there exist polynomials $p$, $q$ such that
  \(
    \Le{\al A}{n} \leq p(\Le{\al B}{n})
  \) and \(
    \Le{\al B}{n} \leq q(\Le{\al A}{n})
  \)
  for each $n \in \N$.
\end{definition}

It is not known to the authors whether there is a~finite algebra of finite type
which is not resilient against change of signature.  We show that two classes of
algebras are resilient. In both cases, the sequences $\Le{\al A}{n}$
are close to the theoretical lower bound given by a simple counting argument
described in the following section. These two cases are \emph{primal algebras}
and \emph{supernilpotent algebras} in congruence modular varieties. We recall
that an algebra $\al A$ with a~universe $A$ is called \emph{primal} if every
operation on $A$ is a~term operation of $\al A$.

\begin{theorem} \label{thm:primal}
  Let $\al A$ be a finite primal algebra of finite type with at least $2$
  elements.  Then there exist positive real numbers $c_1,c_2$  such that for all
  $n \in \mathbb N$,
  \(
    2^{c_1n} \leq \Le{\al A}{n} \leq 2^{c_2n}.
  \)
\end{theorem}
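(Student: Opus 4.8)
My plan is to prove the two bounds separately and by quite different arguments: the lower bound $2^{c_1n}\le\Le{\al A}{n}$ is the ``simple counting argument'' mentioned in the introduction, while the upper bound $\Le{\al A}{n}\le 2^{c_2n}$ will come from an induction on the arity in which primality supplies a fixed-size case-distinction gadget. Throughout, $m:=|A|\ge 2$ and $|F|$ denotes the (finite) number of basic operations.

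For the lower bound I would argue as follows. Since $\al A$ is primal, each of the $m^{m^n}$ functions $A^n\to A$ is an $n$-ary term operation. On the other hand, writing an $n$-ary term in prefix notation identifies it with the string of its operation and variable symbols, so there are at most $(|F|+n)^{\ell+1}$ $n$-ary terms of length at most $\ell$. Hence, if every $n$-ary term operation is induced by a term of length at most $\ell:=\Le{\al A}{n}$, then $(|F|+n)^{\ell+1}\ge m^{m^n}$ and therefore
\[
  \Le{\al A}{n}\;\ge\;\frac{m^{n}\ln m}{\ln(|F|+n)}-1\;\ge\;\frac{2^{n}\ln 2}{\ln(|F|+n)}-1 .
\]
Because $\ln(|F|+n)\le 2^{n/2}$ for all large $n$, the right-hand side exceeds $2^{n/3}$ for all large $n$; since moreover $\Le{\al A}{n}\ge 2$ for every $n\ge 1$ (there are more $n$-ary operations on a set of at least $2$ elements than $n$-ary terms of length $\le 1$), choosing $c_1>0$ small enough to absorb the finitely many remaining values of $n$ finishes this direction.

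For the upper bound I would fix an enumeration $A=\{a_1,\dots,a_m\}$ and use primality to obtain a term $S(y_0,y_1,\dots,y_m)$ inducing the $(m+1)$-ary \emph{selector} $\sigma$ with $\sigma(a_j,v_1,\dots,v_m)=v_j$; put $L:=\len{S}$, a constant depending on $\al A$ alone. The claim, proved by induction on $n$, is that every $n$-ary term operation of $\al A$ is induced by a term of length at most $L^{n-1}\cdot\Le{\al A}{1}$; since $\Le{\al A}{1}$ is a finite constant (a finite algebra has only finitely many unary term functions), this is $2^{c_2n}$ for a suitable $c_2$. For $n=1$ there is nothing to prove. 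For $n>1$ and an $n$-ary term operation $f$, set $f_j:=f(a_j,x_1,\dots,x_{n-1})$ for $j=1,\dots,m$; by the inductive hypothesis each $f_j$ is induced by a term $T_j$ with $\len{T_j}\le L^{n-2}\cdot\Le{\al A}{1}$. Substituting $x_1$ for $y_0$ and $T_j$, with its variables shifted to $x_2,\dots,x_n$, for $y_j$ into $S$ gives a term $T$ with $T^{\al A}=f$: at a tuple $(b_1,\dots,b_n)$ with $b_1=a_j$ it evaluates to $\sigma(a_j,T_1^{\al A}(b_2,\dots,b_n),\dots,T_m^{\al A}(b_2,\dots,b_n))=T_j^{\al A}(b_2,\dots,b_n)=f(b_1,\dots,b_n)$. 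Since substituting terms of length at most $t$ for the variables of a term of length $L$ produces a term of length at most $Lt$ (each of the $L$ symbol occurrences in $S$ contributes at most $\max_j\len{T_j}$ to $\len{T}$), we get $\len{T}\le L\cdot L^{n-2}\Le{\al A}{1}=L^{n-1}\Le{\al A}{1}$, closing the induction.

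The step I expect to be the real obstacle is controlling term length under substitution. Built naively — for instance as a disjunctive-normal-form-style term using a binary ``conjunction'' and a binary ``disjunction'' — the length can blow up \emph{doubly} exponentially, because a variable occurring twice in a gadget doubles a subterm at each level of nesting. The induction above sidesteps this by removing a single variable at a time and plugging into one \emph{fixed} gadget $S$, so that $\len{\cdot}$ is multiplied by the constant $L$ exactly $n-1$ times; this is what keeps the bound at $2^{O(n)}$. (A balanced-tree version of the normal-form argument would also work and give $c_2$ close to $\log_2 m$, but needs more care.) The rest is routine bookkeeping: the substitution-length estimate, the finiteness of $\Le{\al A}{1}$, and the explicit choice of $c_1,c_2$ — with $n=0$, if $\N$ is taken to include it, handled by the trivial fact that $\Le{\al A}{0}$ is a constant.
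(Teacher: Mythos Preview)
Your proof is correct. The lower bound is essentially the paper's own counting argument (Lemma~\ref{lem:general-bound}\,\eqref{it:1}--\eqref{it:1a} applied to $\Fs{\al A}{n}=|A|^{|A|^n}$).

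For the upper bound you take a genuinely different route. The paper first passes to a term-equivalent algebra $\al B=(A,+,\cdot,\chi_{a_1},\dots,\chi_{a_m},0,1)$, quoting a structure theorem for primal algebras from \cite{KP:PCIA}; it then writes each $n$-ary function in a Lagrange-interpolation form $\sigma^+_{m^n}(\dots)$ with a balanced summation tree, obtains a linear bound on $\He{\al B}{n}$, transfers this height bound back to $\al A$ via Lemma~\ref{termequivalent}, and finally converts height to length by Lemma~\ref{lem:general-bound}\,\eqref{it:4}. You instead stay in the original signature throughout: a single selector term $S$ replaces the whole $(+,\cdot,\chi)$-machinery, and the one-variable-at-a-time induction together with the elementary substitution estimate $\len{S(t_0,\dots,t_m)}\le\len{S}\cdot\max_i\len{t_i}$ yields the exponential length bound directly, without the detour through height. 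Your argument is more self-contained (it needs only the definition of primality, not the external structure theorem) and controls length rather than height; the paper's route, on the other hand, isolates the linear height bound and the change-of-signature stability as reusable lemmas, which fits its broader programme.
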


\begin{corollary}\label{cor:primalresilient}
  Every finite primal algebra of finite type is resilient.
\end{corollary}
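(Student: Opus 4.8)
The plan is to deduce Corollary~\ref{cor:primalresilient} directly from Theorem~\ref{thm:primal}. The first point to record is that primality depends only on the clone of term operations: if $\al B$ is term equivalent to $\al A$, then by definition $\al B$ lives on the same universe $A$ (so $|A| \geq 2$) and has exactly the same term operations as $\al A$, namely all operations on $A$; hence $\al B$ is itself primal. Since $\al B$ is assumed to be of finite type, Theorem~\ref{thm:primal} applies to $\al B$ just as to $\al A$.

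Now fix such a $\al B$. Theorem~\ref{thm:primal} supplies constants $c_1,c_2>0$ with $2^{c_1 n}\leq\Le{\al A}{n}\leq 2^{c_2 n}$ and constants $d_1,d_2>0$ with $2^{d_1 n}\leq\Le{\al B}{n}\leq 2^{d_2 n}$, valid for every $n\in\N$. I would then combine the upper bound for $\al A$ with the lower bound for $\al B$:
\[
  \Le{\al A}{n}\ \leq\ 2^{c_2 n}\ =\ \bigl(2^{d_1 n}\bigr)^{c_2/d_1}\ \leq\ \bigl(\Le{\al B}{n}\bigr)^{c_2/d_1}\ \leq\ \bigl(\Le{\al B}{n}\bigr)^{\lceil c_2/d_1\rceil},
\]
where the last step uses $\Le{\al B}{n}\geq 2^{d_1 n}\geq 1$. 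Thus $p(x)=x^{\lceil c_2/d_1\rceil}$ is a polynomial with $\Le{\al A}{n}\leq p(\Le{\al B}{n})$ for all $n$, and symmetrically $q(x)=x^{\lceil d_2/c_1\rceil}$ satisfies $\Le{\al B}{n}\leq q(\Le{\al A}{n})$. By the definition of resilience, this finishes the proof.

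I do not expect any real obstacle in this argument; all the work sits in Theorem~\ref{thm:primal}. There the upper bound $\Le{\al A}{n}\leq 2^{c_2 n}$ should come from expressing an arbitrary $n$-ary operation by recursion on a single coordinate via a fixed-size ``selector'' term operation (which exists because $\al A$ is primal), giving a recurrence of the shape $\Le{\al A}{n}\leq |A|\cdot\Le{\al A}{n-1}+O(1)$ with a constant base case; the matching lower bound $\Le{\al A}{n}\geq 2^{c_1 n}$ is the counting argument outlined in the following section, since $\al A$ has $|A|^{|A|^n}$ distinct $n$-ary operations while a fixed finite signature produces only $2^{O(\ell)}$ terms of length at most $\ell$. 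In the corollary itself the only things to check are that $c_2/d_1$ and $d_2/c_1$ are finite positive reals, which is immediate from positivity of all four constants, and that rounding the exponents up to integers still yields honest polynomials.
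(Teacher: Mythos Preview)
Your derivation is correct and is precisely what the paper has in mind: it gives no separate proof of the corollary, so the intended argument is exactly to apply Theorem~\ref{thm:primal} to both $\al A$ and the term-equivalent (hence primal) $\al B$ and compare exponents, just as you do. Two minor remarks: the corollary, unlike the theorem, does not assume $|A|\ge 2$, so strictly speaking the one-element case needs a (trivial) separate line, and your parenthetical ``so $|A|\ge 2$'' is not actually justified by term equivalence; and your closing speculation about how the upper bound in Theorem~\ref{thm:primal} is obtained differs from the paper's route, which writes every $n$-ary function in Lagrange-interpolation form as a balanced sum over $|A|^n$ products of characteristic functions (Lemma~\ref{lem:specialfunctionallycomplete}) to get $\He{\al A}{n}=O(n)$ directly, rather than recursing on a single coordinate---though your recursion would also yield the bound.
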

The proof of these results is given in Section~\ref{sec:primal}.

The other property, supernilpotency, can be defined by a~condition on higher
commutators of the algebra. Higher commutators, introduced in \cite{Bu:OTNO},
generalize binary commutators, whose theory has been described in
\cite{FM:CTFC}. They have been studied in \cite{AM:SAOH} and recently in
\cite{Op:ARDO,Mo:HCTF,Wi:OSA}.  By a~result of Kearnes \cite{Ke:CMVW}, the
supernilpotent algebras distinguish themselves among other algebras in
a~congruence modular variety by having a~small number of term operations.
For a precise statement of this fact and a self-contained definition of
supernilpotency, we refer the reader to Section \ref{sec:spn}.

\begin{theorem} \label{thm:supernilpotent}
  Let $\al A$ be a finite supernilpotent algebra in a~congruence modular variety
  with at least $2$ elements. Then there exists a~polynomial $q$ such that for
  all $n \in \N$,
  \(
    n - 1 \leq \Le{\al A}{n} \leq q(n).
  \)
\end{theorem}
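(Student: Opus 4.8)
The plan is to prove the two inequalities by quite different means: the lower bound $n-1 \le \Le{\al A}{n}$ is elementary, whereas the polynomial upper bound is extracted from the structural description of supernilpotent algebras recalled in Section~\ref{sec:spn}. The real content lies entirely in that structural description, so within this proof the ``hard part'' is simply the invocation of the commutator-theoretic normal form; everything else is bookkeeping.

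For the lower bound, I would use that $\al A$ is nilpotent and, being nilpotent in a congruence modular variety, has a Mal'cev term $m$ (part of the structure theory recalled in Section~\ref{sec:spn}). Fix $0 \in A$ and set $x \ast y := m(x,0,y)$; by the Mal'cev identities $0 \ast y = y$ and $x \ast 0 = x$, so $0$ is a two-sided identity for $\ast$. Then the $n$-ary term $t = (\cdots ((x_1 \ast x_2) \ast x_3) \cdots \ast x_n)$ induces a term operation that depends on each of $x_1,\dots,x_n$: substituting $0$ for every variable other than $x_j$ leaves the induced operation equal to $x_j$. Consequently every term $s$ with $s^{\al A} = t^{\al A}$ must contain an occurrence of each of $x_1,\dots,x_n$, so $\len s \ge n$, and therefore $\Le{\al A}{n} \ge \lenop_{\al A}(t^{\al A}) \ge n \ge n-1$; for $n = 0$ the inequality is vacuous.

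For the upper bound, the key input I would import from Section~\ref{sec:spn} (going back to Kearnes~\cite{Ke:CMVW} and Aichinger--Mudrinski~\cite{AM:SAOH}) is a normal form for term operations of a $k$-supernilpotent Mal'cev algebra: there is a constant $k$ such that every $n$-ary term operation $t^{\al A}$ can be written, using a fixed binary combining term $\ast$ and constants, as
\[
  t^{\al A}(x_1,\dots,x_n) \;=\; c \;\ast\; \prod_{\substack{I \subseteq \{1,\dots,n\} \\ 1 \le |I| \le k}}^{*} t_I\bigl((x_i)_{i \in I}\bigr),
\]
where $c \in A$ is a constant, $\prod^{*}$ denotes a fixed arrangement of $\ast$ over the index sets $I$, and each $t_I$ is a term operation of $\al A$ depending only on the coordinates in $I$ and hence of arity at most $k$. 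The two features that matter are: only $\sum_{j=1}^{k}\binom{n}{j} = O(n^k)$ index sets occur, and the functions $t_I$ range over a \emph{fixed finite set} independent of $n$ --- there are at most $\sum_{j \le k} |A|^{|A|^{j}}$ functions $A^{\le k} \to A$, and each one that arises is a term operation of bounded arity. Establishing this normal form --- in particular that the pieces indexed by sets of size larger than $k$ vanish and that the arrangement reconstructs $t^{\al A}$ --- is where supernilpotency is genuinely used; in this write-up I would quote it from Section~\ref{sec:spn}, so that step is the main (imported) obstacle.

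Granting the normal form, the length estimate is routine bookkeeping. Put $L_0 := \max\{\Le{\al A}{j} : 0 \le j \le k\}$, a finite constant depending only on $\al A$, so that every term operation of arity at most $k$ --- the constant $c$ and each $t_I$ --- is induced by a term of length at most $L_0$; renaming the at most $k$ variables of such a term to the actual variables $x_i$ ($i \in I$) does not change its length. There are $1 + O(n^k)$ such pieces, assembled by that many copies of the fixed binary term $\ast$. Arranging the assembly as a balanced binary tree, so that the nesting depth is $O(\log n)$ and each level of nesting enlarges lengths by at most a constant factor, yields a term inducing $t^{\al A}$ whose length is polynomial in the number of pieces, hence polynomial in $n$; call this bound $q(n)$, and for the finitely many $n \le k$ use $\Le{\al A}{n} \le L_0 \le q(n)$ directly. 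This gives $\Le{\al A}{n} \le q(n)$ for all $n \in \N$ and completes the argument.
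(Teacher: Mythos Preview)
Both halves of your argument silently replace terms by polynomials when you fix an element $0\in A$, and this is where things break. For the lower bound, $x\ast y:=m(x,0,y)$ is a polynomial, not a term of $\al A$, and the iterated product need not induce a term operation at all: in $\al A=(\mathbb Z_p,\,x-y+z)$ (abelian, hence $1$-supernilpotent) the term operations are exactly the maps $\sum a_ix_i$ with $\sum a_i=1$, whereas your $n$-fold $\ast$-product is $x_1+\cdots+x_n$, a term operation only when $n\equiv 1\pmod p$. The paper avoids this by iterating the Mal'cev term using variables only: $t_k=m(t_{k-1},x_{2k},x_{2k+1})$ is a genuine $(2k{+}1)$-ary term, and dependence on every argument is checked by evaluating at $(a,\dots,a,b,\dots,b)$.

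The same confusion undermines the upper bound. The commutator normal form you invoke is a decomposition relative to a base point, so the constant $c$, the combining operation $\ast$, and the pieces $t_I$ are in general only polynomial operations; in the same example the pieces of $\sum a_ix_i$ are the unary maps $x\mapsto a_ix$, which are term operations only when $a_i=1$. Hence your bound $L_0=\max_{j\le k}\Le{\al A}{j}$ does not apply to them, and what you assemble is a short polynomial of $\al A$, not a short term. The paper's route is different and sidesteps the issue entirely: it applies Theorem~\ref{thm:spn-identities}\,\eqref{it:t2} with the $\tup a_i$ set to tuples of a \emph{fresh variable} $y$ rather than a constant, so the resulting identity stays within terms and expresses $f$ as the strong cube term $q_k$ applied to copies of $f$ of strictly smaller essential arity; this gives the recursion $\He{\al A}{n}\le\he{q_k}+\He{\al A}{\lfloor cn\rfloor}$, hence logarithmic height and polynomial length via Lemma~\ref{lem:general-bound}\,\eqref{it:4}. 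Your normal-form approach can likely be salvaged by the same variable-for-constant trick---take the base point to be a fresh variable $y$ and observe that each piece then becomes a $(|I|{+}1)$-ary term operation---but as written the argument does not go through.
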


\begin{corollary}
  Every finite supernilpotent algebra of finite type in a~congruence modular
  variety is resilient.
\end{corollary}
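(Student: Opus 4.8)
The plan is to reduce the statement to Theorem~\ref{thm:supernilpotent}, applied both to $\al A$ and to an arbitrary term-equivalent algebra of finite type. We may assume $|A| \ge 2$, since if $|A| = 1$ then $\Le{\al A}{n} = \Le{\al B}{n} = 1$ for every $n$ and the required comparison is immediate. The key technical point to establish is that being a finite supernilpotent algebra in a congruence modular variety is invariant under passing to a term-equivalent algebra, so that Theorem~\ref{thm:supernilpotent} applies to the comparison algebra as well.

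To carry this out, I would fix an algebra $\al B$ of finite type that is term equivalent to $\al A$; having the same universe, $\al B$ is finite with at least $2$ elements. First I would observe that a binary relation on $A$ is a congruence of $\al A$ precisely when it is compatible with every term operation of $\al A$ (and likewise for $\al B$), so that $\Con\al A = \Con\al B$, and that $\Pol\al A$ is the clone generated by $\Clo\al A$ together with the constant operations, so that $\Pol\al A = \Pol\al B$. Since the higher commutator $[\alpha_1,\dots,\alpha_k]$ of congruences is defined solely in terms of the congruence lattice and the (term or polynomial) operations of the algebra via the appropriate term condition, the higher commutator operations of $\al A$ and of $\al B$ agree; in particular $\al A$ is supernilpotent if and only if $\al B$ is. Likewise, by Day's theorem congruence modularity of the variety generated by $\al A$ is equivalent to the existence of quaternary operations $d_0,\dots,d_k$ in $\Clo\al A$ satisfying the Day identities, and since $\Clo\al A = \Clo\al B$ the same operations witness congruence modularity of the variety generated by $\al B$. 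Thus both $\al A$ and $\al B$ satisfy the hypotheses of Theorem~\ref{thm:supernilpotent}.

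Granting this, the bounds follow by elementary manipulations. Theorem~\ref{thm:supernilpotent} provides polynomials $q_{\al A}, q_{\al B}$ with $\Le{\al A}{n} \le q_{\al A}(n)$, $\Le{\al B}{n} \le q_{\al B}(n)$, and also $\Le{\al A}{n} \ge n-1$ and $\Le{\al B}{n} \ge n-1$, for all $n$. Replacing $q_{\al A}$ and $q_{\al B}$ by polynomials dominating them on $[0,\infty)$ and nondecreasing there (for instance, take the absolute value of each coefficient) leaves the upper bounds valid, so we may assume monotonicity. Then from $n \le \Le{\al B}{n} + 1$ we obtain
\[
  \Le{\al A}{n} \le q_{\al A}(n) \le q_{\al A}\!\bigl(\Le{\al B}{n} + 1\bigr),
\]
so $p(x) := q_{\al A}(x+1)$ satisfies $\Le{\al A}{n} \le p(\Le{\al B}{n})$ for all $n$; by the symmetric argument $q(x) := q_{\al B}(x+1)$ satisfies $\Le{\al B}{n} \le q(\Le{\al A}{n})$. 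This shows $\al A$ is resilient.

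I expect the only genuinely non-routine step to be the invariance claim in the second paragraph: one must check that ``supernilpotent'' and ``lying in a congruence modular variety'' are certified by data (higher commutator operations, Day operations) that belong to the term clone and hence are unaffected by a change of signature. Routing the argument through the polynomial upper bound of Theorem~\ref{thm:supernilpotent} is what makes it work: directly rewriting an $\al A$-term as a $\al B$-term by substituting fixed $\al B$-terms for the basic operations of $\al A$ can blow up the length exponentially, and so by itself yields no polynomial comparison between $\Le{\al A}{n}$ and $\Le{\al B}{n}$.
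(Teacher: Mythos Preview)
Your proof is correct and follows precisely the route the paper intends: the paper does not spell out a separate proof of this corollary but treats it as an immediate consequence of Theorem~\ref{thm:supernilpotent}, and your argument supplies exactly the details that derivation requires---namely that the hypotheses (supernilpotency and congruence modularity) depend only on $\Clo\al A$ and hence transfer to any term-equivalent $\al B$, so Theorem~\ref{thm:supernilpotent} applies to both algebras and the two-sided bounds $n-1 \le \Le{\al A}{n} \le q_{\al A}(n)$, $n-1 \le \Le{\al B}{n} \le q_{\al B}(n)$ combine to give the polynomial comparisons. Your handling of monotonicity and the trivial case $|A|=1$ is clean, and the invocation of Day terms to transfer congruence modularity is the right mechanism.
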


These results are proved in Section~\ref{sec:spn}.  For groups, Horv\'{a}th and
Nehaniv have proved that for each nilpotent group $\ab{G}$, $\Le{\ab{G}}{n}$ is
bounded from above by a polynomial in $n$, and for simple groups, one has
\(
  \Le{\ab{G}}{n} \le C (\ab{G}) \cdot  n^8 \cdot |G|^n
\)
where $C(\ab{G})$ depends only on the group $\ab{G}$ \cite{HN:LOPO}.

\section{Notation and general bounds}

We will write $\N$ for the set of positive integers, $\N_0 = \N \cup \{0\}$, and
$\log (n)$ is the logarithm of $n$ in base $e$.  For an~algebra $\al A$, the set
of all term operations is denoted by $\Clo \al A$ and called the clone of term
operations of $\al A$. Its $n$-ary part is denoted by $\Clo_n \al A$.  $\Clo_n
\al A$ can be endowed with a~structure of an algebra, denoted by $\F{\al A}{n}$,
in the same language as $\al A$ by setting:
\[
  f^{\F{\al A}{n}} (t_1^{\al A},\dots,t_k^{\al A}) =
  (f (t_1,\dots,t_k))^{\al A},
\]
or equivalently defining the operations coordinatewise, seeing $\Clo_n\al
A$ as a~subuniverse of $\al A^{A^n}$.
It is well-known that this algebra is freely generated (in $\HSP \al A$) by the
projections (the set $\{ x_1^{\al A},\dots,x_n^{\al A}\}$). The sequence defined
by
\[
  \Fs{\al A}{n} = \left| \F{\al A}{n} \right|
\]
for $n \in \N$ is called the \emph{free spectrum} of $\al A$.

We will now introduce another measure for the complexity of a~term. Every term
can be expressed as a~rooted tree whose vertices correspond to the function and
variable symbols appearing in it. The variables correspond to leaves, each
function symbol has exactly as many children as is the arity of the symbol, and
the children have a~(usually implicitly) given order. We call this tree the
\emph{tree representation} of a~term $t$.
The number of vertices of the tree representing $t$ is exactly $\len t$.
The \emph{height} of the term $t$, denoted by $\he t$, represents the height of
the tree representation of $t$, with an adjustment if $t$ contains a~nullary
operation symbol:
Precisely, we define $\he{t}$ inductively by setting:
$\he{x} = 0$ if $x$ is a variable;
$\he{f} = 1$ if $f$ is a~nullary operation symbol; and
if $f$ is a~$k$-ary operation symbol where $k\in \mathbb N$, and $t_1,\dots,t_k$ are
terms, we set $\he{f(t_1,\dots,t_k)} = 1 + \max\{\he{t_1},\dots,\he{t_k}\}$.
Through the rest of the present paper, we set $\max \emptyset = 0$. This will
simplify some proofs since it saves a case distinction between nullary and
non-nullary operation symbols.

The \emph{height of a~term function} $t^\A$ is defined by
\[
\htop_{\al A} (t^{\al A}) = \min \{ \he s :
    s \text{ is a term in the language of $\al A$ with } s^{\al A} = t^{\al A} \}.
\]
This allows us to define a~second sequence measuring the complexity of
terms of an~algebra $\al A$. For $n \in \N$, let
\[
  \Ht_{\al A}(n) = \max \{ \htop_{\al A} ( t^{\al A} ) :
    \text{$t$ is an $n$-ary term in the language of $\al A$} \}.
\]

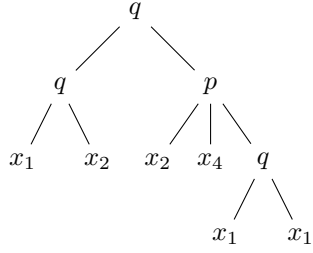
\begin{figure}
    \begin{tikzpicture}
      \node (q) at (0,0) {$q$};
        \node (q1) at (-1,-1) {$q$};
          \node (x11) at (-1.5,-2) {$x_1$};
          \node (y12) at (-.5,-2) {$x_2$};
        \node (p2) at (1,-1) {$p$};
          \node (y21) at (.3,-2) {$x_2$};
          \node (w22) at (1,-2) {$x_4$};
          \node (q23) at (1.7,-2) {$q$};
            \node (x231) at (1.2,-3) {$x_1$};
            \node (x232) at (2.2,-3) {$x_1$};
        \draw (p2) -- (q) -- (q1);
        \draw (x11) -- (q1) -- (y12);
        \draw (y21) -- (p2) -- (w22); \draw (p2) -- (q23);
        \draw (x231) -- (q23) -- (x232);
    \end{tikzpicture}
    \caption{Tree representation of term
      \(
        q( q( x_1, x_2 ), p( x_2, x_4, q( x_1, x_1 )))
      \).}
\end{figure}

\begin{lemma} \label{lem:general-bound}
  Let $\al A$ be a~finite algebra of finite type, let $r$ be the number of
  operation symbols of $\al A$ and $m$ be their maximal arity.
  Then:
  \begin{enumerate}
  \item \label{it:1}  $\Fs{\A}{n} < (n+r+1)^{\Len_\A (n)}$ for all $n \in \N$;
    \item \label{it:1a} there exists a~positive real number $c$ such that
      for all $n \ge 2$ we have $\Fs{\A}{n} \leq 2^{c \log (n) \Len_{\al A} (n)}$;
    \item \label{it:2} $\Ht_\A (n) \leq \Fs{\A}{n}$ for all $n \in \N$;
    \item \label{it:3} $\He{\al A}{n} \leq \Le{\al A}{n}$ for all $n \in \N$;
    \item \label{it:4} $\Le{\al A}{n} \leq 1 + m + \dots + m^{\Ht_{\al A}(n)}
      \leq (m+1)^{\Ht_{\al A}(n)}$ for all $n \in \N$.
  \end{enumerate}
\end{lemma}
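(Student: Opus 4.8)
The plan is to prove the five inequalities essentially independently, each by an elementary counting or combinatorial argument about terms and their tree representations.

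For \eqref{it:1}, I would bound the number of distinct term operations by the number of distinct terms of length at most $\Len_\A(n)$ over $n$ variables. A term of length $\ell$ is a string of $\ell$ symbols, each symbol being one of the $n$ variables or one of the $r$ operation symbols; so the number of strings of length exactly $\ell$ is at most $(n+r)^\ell$, and summing a geometric series over $\ell = 1, \dots, L$ with $L = \Len_\A(n)$ gives a bound strictly below $(n+r+1)^{L}$. Since every $n$-ary term operation is induced by some term of length at most $L$, and the free algebra $\F{\A}{n}$ has exactly one element per term operation, this yields $\Fs{\A}{n} < (n+r+1)^{\Len_\A(n)}$. Item \eqref{it:1a} is then a direct consequence: take logarithms to base $2$, so $\log_2 \Fs{\A}{n} < \Len_\A(n)\log_2(n+r+1)$, and for $n \ge 2$ we have $\log_2(n+r+1) \le c\log n$ for a suitable constant $c$ depending only on $r$ (absorbing the base change from $e$ to $2$).

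For \eqref{it:3}, I would show that for any term $t$ one has $\he t \le \len t$: by induction on the structure of $t$, a variable has height $0 < 1 = \len x$, a nullary symbol has height $1 = \len f$, and for $f(t_1,\dots,t_k)$ we get $\he{f(t_1,\dots,t_k)} = 1 + \max_i \he{t_i} \le 1 + \max_i \len{t_i} \le 1 + \sum_i \len{t_i} = \len{f(t_1,\dots,t_k)}$. Taking a shortest term realizing a given $n$-ary term operation and then passing to the maximum over term operations gives $\He{\al A}{n} \le \Le{\al A}{n}$. For \eqref{it:4}, conversely, I would show that a term of height $h$ has length at most $1 + m + \dots + m^h$: a tree of height $h$ in which every internal node has at most $m$ children has at most $m^i$ nodes at depth $i$, so at most $\sum_{i=0}^h m^i$ nodes in total; a straightforward induction on $h$ makes this precise (the height-$0$ and nullary base cases, and the inductive step bounding each of the $\le m$ subtrees of height $\le h-1$). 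Applied to a shortest-height term realizing a given term operation, this shows $\lenop_\A(t^\A) \le 1 + m + \dots + m^{\He{\al A}{n}}$, and since the number of symbols needed cannot decrease with $n$ one passes to $\Le{\al A}{n}$ on the left; the final estimate $1 + m + \dots + m^h \le (m+1)^h$ is an elementary inequality (valid for $m \ge 1$, with the degenerate case $m=0$, i.e.\ only nullary or no operation symbols, handled separately or trivially).

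The genuinely interesting point is \eqref{it:2}, the bound $\Ht_\A(n) \le \Fs{\A}{n}$. The idea is that if a term $t$ of minimal height $h = \He{\al A}{n}$ realizes some $n$-ary term operation, then along a longest root-to-leaf path in the tree representation of $t$ we encounter $h+1$ subterms $t = s_0, s_1, \dots, s_h$ (or rather $h$ proper descents), and if two of these subterms induced the same term operation we could splice one in for the other and obtain a term for the same operation of strictly smaller height, contradicting minimality. Hence the subterms along that path induce pairwise distinct elements of $\F{\A}{n}$, so their number — which is $h+1$ or, counting the descents, $h$ — is at most $\Fs{\A}{n}$. The main obstacle here is making the splicing argument fully rigorous: one must check that replacing a subterm $s_i$ occurring on the chosen path by $s_j$ (with $j$ the deeper of the two equal ones) genuinely reduces the height of the whole tree and preserves the induced function, which uses that term operations are closed under composition (equivalently, that $\F{\A}{n}$ is well-defined with operations given coordinatewise) and a small lemma that substituting function-equivalent subterms yields a function-equivalent term. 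Once that substitution lemma is in place, the counting is immediate, and the off-by-one bookkeeping between "number of vertices on the path" and "height" only affects whether the bound is $\Ht_\A(n) \le \Fs{\A}{n}$ or $\Ht_\A(n) < \Fs{\A}{n}$, so the stated non-strict inequality is safe.
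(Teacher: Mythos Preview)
Your arguments for \eqref{it:1}, \eqref{it:1a}, \eqref{it:3}, and \eqref{it:4} coincide with the paper's.

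For \eqref{it:2} your approach is different from the paper's and, as stated, has a gap. The claim that replacing $s_i$ by the deeper $s_j$ ``genuinely reduces the height of the whole tree'' is not true in general: if $t$ has another root-to-leaf path of length $h$ that does not pass through $s_i$, then after the splice the resulting term $t'$ still has height $h$. (Concretely, if $t=g(u,v)$ with $\he u=\he v=h-1$ and the chosen path runs through $u$, shrinking $u$ leaves $\he{t'}=1+\he v=h$.) So minimality of height alone does not yield the contradiction you want. The fix is easy: among minimal-height representatives of $f$, pick $t$ of minimal \emph{length}; since $s_j$ is a proper subterm of $s_i$ one has $\len{s_j}<\len{s_i}$, so the spliced term $t'$ satisfies $(t')^{\al A}=f$, $\he{t'}\le h$ (hence $=h$ by height-minimality), and $\len{t'}<\len t$, contradicting the secondary minimality. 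With that adjustment your path-pigeonhole argument goes through.

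The paper avoids this issue altogether by a closure argument: set $H_0=\{\pi_1,\dots,\pi_n\}$ and let $H_{k+1}$ be $H_k$ together with all values $f^{\al A^{A^n}}(t_1,\dots,t_m)$ for basic $f$ and $t_i\in H_k$. Then $H_k$ is exactly the set of $n$-ary term operations of height at most $k$; the chain $H_0\subsetneq H_1\subsetneq\cdots$ is strictly increasing until it stabilises, and once $H_{k}=H_{k+1}$ the set $H_k$ is a subuniverse of $\al A^{A^n}$ containing the generators, hence equals $F$. Thus the least $k_0$ with $H_{k_0}=F$ satisfies $k_0\le|F|$, and $\Ht_{\al A}(n)\le k_0$. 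This is a little slicker than the splicing route because it never needs to reason about a particular term or worry about multiple longest paths.
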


\begin{proof}
  \eqref{it:1} By the definition of the sequence $\Len_\A(n)$, every $n$-ary term
  operation is given by a~term whose representation in prefix notation is
  a~string of length at most $\Len_\A(n)$ using $n$ symbols for variables and
  $r$ symbols for basic operations.
  Therefore, there are at most $\sum_{l=1}^{\Len_\A(n)} (n+r)^l <
  (n+r+1)^{\Len_{\al A}(n)}$ term operations of $\al A$ of arity $n$.  Now
  \eqref{it:1a} follows because there is a positive real number $c$ such that
  $\log_2 (n+r+1) \leq c \log n$ for all $n \geq 2$.

  \eqref{it:2}
  Let $F \le \al A^{A^n}$ be the universe of the free algebra generated
  by the projections $\pi_1,\dots,\pi_n$, i.e., the elements of $F$ are
  precisely the $n$-ary term functions of $\al A$.
  We consider the following alternative construction of the set $F$:
  We start by taking $H_0 = \{ \pi_1,\dots,\pi_n \}$, and then
  inductively define sets $H_k$ by
  \[
    H_{k+1} = \{ f^{\al A^{A^n}} ( t_1, \dots, t_m ) :
    t_1,\dots, t_m\in H_k,
    f \text{ is a~basic operation of $\al A$} \}\cup H_k
  \]
  for $k \in \N_0$.  If for some $k$, $H_{k+1} = H_k$, then  $H_k$ is closed
  under the operations of $\al A^{A^n}$,  therefore a~subuniverse, and hence $F
  = H_k$. Let $k_0$ be the smallest $k$ such that $H_k = H_{k+1}$.  Then $F =
  H_{k_0}$. Now, since $|H_{k+1}| \geq |H_k| + 1$ for all  $k< k_0$, we obtain
  $|F| \geq k_0$.
  Further, observe that for any $k$, $H_k$ is the set of all $n$-ary term
  operations of $\al A$ which can be expressed by a~term of depth at most $k$,
  in other words,
  \(
    H_k = \{ f^{\al A} \in \Clo_n(\al A) : \he{f^{\al A}} \leq k \}
  \).
  Therefore, since $H_{k_0} = F$, we obtain that $\he{f^{\al A}} \leq k_0 \leq
  |F|$ for any $n$-ary term operation $f^{\al A}$ of $\al A$, which immediately
  yields the required inequality.

  \eqref{it:3}
  Let $t$ be an $n$-ary term. Then there is a term $s$ with $\len{s} \le \Le{\al
  A}{n}$ such that $t^{\al A} = s^{\al A}$. For every term, we have $\len{s} \ge
  \he{s}$, and therefore $\he{s} \le \Le{\al A}{n}$.  This implies
  $\He{\al A}{n} \le \Le{\al A}{n}$.

  \eqref{it:4}
  We first observe that for every term $t$,
  \begin{equation}\label{lenht}
    \len {t} \leq 1 + m + \dots + m^{\he{t}}
  \end{equation}
  because a rooted tree in which every vertex has at most $m$ children has at
  most $m^d$ vertices of depth $d$.
  In order to obtain an upper bound for $\Le{\al A}{n}$, we let $t$ be an
  $n$-ary term. Hence $\he{t^{\al A}} \le \He{\al A}{n}$, and hence there is
  a~term $s$ with $\he{s} \leq \He{\al A}{n}$ and $s^{\al A} = t^{\al A}$. Now
  \eqref{lenht}  implies that $\len{s}$, and therefore also $\lenop_{\al A}
  (t^{\al A})$, is at most $\sum_{i = 0}^{\He{\al A}{n}} m^i$ which completes
  the proof of the first inequality.  The second inequality is immediate.
\end{proof}
 
For an $n$-ary term $t$ and terms $t_1,\ldots, t_n$, we write $t(t_1,\ldots,
t_n)$ for the term obtained by substituting every occurrence of $x_i$ in $t$ by
$t_i$.

\begin{lemma}\label{lem:heightfromthemiddle}
  Let $n\in\N$, let $\mathcal{L}$ be a language, and let $t,t_1,\dots,t_n$
  be terms of type $\mathcal{L}$. 
  If $t$ is $n$-ary, then
  \(
    \he{t(t_1,\dots,t_n)} \leq \he{t} + \max\{\he{t_1},\dots,\he{t_n}\}
  \).
\end{lemma}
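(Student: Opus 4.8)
The plan is to prove the inequality by induction on the structure of the term $t$. The statement is $\he{t(t_1,\dots,t_n)} \leq \he{t} + \max\{\he{t_1},\dots,\he{t_n}\}$ for an $n$-ary term $t$; write $M = \max\{\he{t_1},\dots,\he{t_n}\}$ for brevity throughout the argument. Note that $t$ being "$n$-ary" here should be read as: $t$ uses only variables among $x_1,\dots,x_n$ (it need not use all of them), so substitution $t(t_1,\dots,t_n)$ is well-defined.

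First I would handle the base cases. If $t$ is a single variable $x_i$ for some $i \le n$, then $t(t_1,\dots,t_n) = t_i$, so $\he{t(t_1,\dots,t_n)} = \he{t_i} \le M = 0 + M = \he{t} + M$, using $\he{x_i} = 0$. If $t$ is a nullary operation symbol $f$, then substitution changes nothing, so $t(t_1,\dots,t_n) = f$ and $\he{t(t_1,\dots,t_n)} = 1 = \he{t} \le \he{t} + M$ since $M \ge 0$.

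For the inductive step, suppose $t = f(s_1,\dots,s_k)$ where $f$ is a $k$-ary operation symbol with $k \in \N$ and $s_1,\dots,s_k$ are $n$-ary terms (in the sense above). By the induction hypothesis applied to each $s_j$, we have $\he{s_j(t_1,\dots,t_n)} \le \he{s_j} + M$. Since substitution commutes with the term-forming operation, $t(t_1,\dots,t_n) = f\bigl(s_1(t_1,\dots,t_n),\dots,s_k(t_1,\dots,t_n)\bigr)$, and hence by the definition of height,
\[
  \he{t(t_1,\dots,t_n)} = 1 + \max_{1 \le j \le k} \he{s_j(t_1,\dots,t_n)}
  \le 1 + \max_{1 \le j \le k} \bigl( \he{s_j} + M \bigr)
  = 1 + \max_{1 \le j \le k} \he{s_j} + M = \he{t} + M,
\]
which closes the induction.

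I do not expect any serious obstacle here; the argument is a routine structural induction. The only point requiring a little care is the convention $\max \emptyset = 0$ (adopted in the paper): when $t = f$ is a nullary symbol, the "max over children" is empty, and the base case above must be stated separately rather than folded into the inductive step — but this is exactly the case distinction the paper's convention is designed to streamline, and the $k \in \N$ (i.e.\ $k \ge 1$) hypothesis in the inductive step makes the $\max_{1 \le j \le k}$ nonempty so that the step goes through cleanly.
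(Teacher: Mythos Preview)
Your proposal is correct and follows essentially the same approach as the paper: both argue by induction on $t$, with the variable case as base and the same height computation in the step $t = f(s_1,\dots,s_k)$. The only cosmetic difference is that you treat a nullary symbol as a separate base case, while the paper absorbs it into the inductive step via the convention $\max\emptyset = 0$.
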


\begin{proof}
  We proceed by induction on $\he{t}$: If $t$ is a variable $x_i$, then
  $\he{t} = 0$, and the statement reduces to
  $\he{t_i} \le \max\{\he{t_1},\dots,\he{t_n}\}$ which is trivial.
  For the induction step, we assume
  that $t = f(s_1,\dots,s_k)$ for
 some operation symbol $f$ and $n$-ary terms $s_1,\dots,s_k$. To simplify the
formulae below, let $m = \max\{\he{t_1},\dots,\he{t_n}\}$.
Observe that $\he{s_i}
\leq \he{t} - 1$. From the induction hypothesis we obtain
\[
  \he{s_i(t_1,\dots,t_n)} \leq \he{s_i} + m
    \leq \he{t} + m - 1
\]
for every $i \in \{1,\dots,k\}$. Therefore,
\begin{multline*}
  \he{t(t_1,\dots,t_n)}  =
  \he{f(s_1 (t_1, \dots,  t_n),\dots, s_k (t_1,\ldots,t_n))} \\
  =
  1 + \max\{\he{s_1(t_1,\dots,t_n)},\dots,
    \he{s_k(t_1,\dots,t_n)}\} \leq \he{t} + m.
\end{multline*}
\end{proof}

\begin{lemma}\label{termequivalent}
  Let $\al A$ and $\al B$ be two term equivalent finite algebras
  of finite type. Then there exist positive real numbers $c_1,c_2$ such that for
  all $n \in \N$, 
  \[
    c_1\He{\al B}{n} \leq \He{\al A}{n} \leq c_2\He{\al B}{n}.
  \]
\end{lemma}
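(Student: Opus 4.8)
The plan is to exploit term equivalence to rewrite every term over the language of one of the two algebras into a~term over the language of the other that induces the same function, at the price of only a~constant blow-up in height; the constant will measure the cost of translating one signature into the other. Concretely, since $\al A$ and $\al B$ share their universe and their clone of term operations, each basic operation $f$ of $\al A$, of arity $k$ say, lies in $\Clo_k\al B$, so there is a~$k$-ary term $t_f$ over the language of $\al B$ with $t_f^{\al B}=f^{\al A}$. I would fix one such $t_f$ for each of the finitely many basic operation symbols of $\al A$ (finiteness uses that $\al A$ is of finite type) and set
\(
  d_A=\max\{\he{t_f} : f\text{ a basic operation symbol of }\al A\},
\)
defining $d_B$ symmetrically. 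For an $n$-ary $\al A$-term $s$, let $\bar s$ be the $\al B$-term obtained by recursively replacing each subterm $f(s_1,\dots,s_k)$ by $t_f(\bar s_1,\dots,\bar s_k)$ and leaving variables unchanged; a~routine structural induction using $t_f^{\al B}=f^{\al A}$ shows $\bar s^{\al B}=s^{\al A}$, so $s$ and $\bar s$ induce the same $n$-ary term function.

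The crux is the height bound $\he{\bar s}\le d_A\cdot\he{s}$ for every $\al A$-term $s$, which I would prove by structural induction on $s$. The variable case is immediate. For $s=f(s_1,\dots,s_k)$, applying Lemma~\ref{lem:heightfromthemiddle} to the substitution into the $k$-ary term $t_f$ and then the induction hypothesis gives
\[
  \he{\bar s}=\he{t_f(\bar s_1,\dots,\bar s_k)}
  \le\he{t_f}+\max_{1\le i\le k}\he{\bar s_i}
  \le d_A+d_A\cdot\max_{1\le i\le k}\he{s_i}=d_A\cdot\he{s},
\]
where the last equality is just $\he{s}=1+\max_{1\le i\le k}\he{s_i}$, and the nullary case $k=0$ is absorbed by the conventions $\max\emptyset=0$ and $\he{f}=1$ together with $\he{t_f}\le d_A$.

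To finish, I would fix $n$ and note that term equivalence gives $\Clo_n\al A=\Clo_n\al B$, so every $n$-ary term function $g$ of $\al B$ is also a~term function of $\al A$; choosing an $\al A$-term $s$ with $s^{\al A}=g$ and $\he{s}=\htop_{\al A}(g)\le\He{\al A}{n}$, the term $\bar s$ witnesses $\htop_{\al B}(g)\le\he{\bar s}\le d_A\cdot\He{\al A}{n}$. Maximizing over $g\in\Clo_n\al B$ yields $\He{\al B}{n}\le d_A\cdot\He{\al A}{n}$, and the symmetric argument yields $\He{\al A}{n}\le d_B\cdot\He{\al B}{n}$; then $c_1=1/\max\{d_A,1\}$ and $c_2=\max\{d_B,1\}$ work, the clipping at $1$ only serving to keep the constants strictly positive in the degenerate situation where all term operations are projections.

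I do not expect a~genuine obstacle. The argument rests entirely on Lemma~\ref{lem:heightfromthemiddle}, which lets the height of a~rewritten term be controlled one level at a~time, and on the finiteness of the signatures, which makes the per-level translation cost uniform; the only slightly fiddly points are the treatment of nullary operation symbols and the positivity of $c_1,c_2$.
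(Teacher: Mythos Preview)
Your proof is correct and follows essentially the same approach as the paper: both arguments fix, for each basic operation of one algebra, a term of bounded height in the other algebra expressing it, and then prove by structural induction (using Lemma~\ref{lem:heightfromthemiddle}) that the recursive rewriting multiplies the height by at most this bound, obtaining the second inequality by symmetry. Your treatment of the nullary case and the positivity of the constants is slightly more explicit than the paper's, but the substance is identical.
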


\begin{proof}
  We first consider the second inequality.
  We choose $c_2$ such that every basic operation $f^{\al B}$ of
  $\al B$ can be expressed as a~term operation of $\al A$ of height at most $c_2$;
  such a $c_2$ exists because $\al B$ is of finite type and the two algebras
  are term equivalent.
  Now, given that $t^{\al A}$ is a~term operation of $\al A$ of arity $n$, and $\al
  A$ and $\al B$ are term equivalent, then $t^{\al A}$ is also a~term operation
  of $\al B$. Therefore, there exists a~term $s$ in the language of $\al B$ of
  height at most $\He{\al B}{n}$ such that $s^{\al B} = t^{\al A}$.
  We will turn this term into a~term $r$ in the language of $\al A$ with
  $r^{\al A} = s^{\al B}$ by substituting every basic operation symbol by its
  definition as a~term of $\al A$ (again, we use that $\al A$ and $\al B$ are
  term equivalent). The term $r$ which is obtained this way will be of height at
  most $c_2\he{s} \leq c_2\He{\al B}{n}$.
  Formally, we prove the existence of a term $r$ with $r^{\al A} = s^{\al B}$
  and $\he{r} \le c_2 \he{s}$ by induction on the height of $s$.
  If $s$ is a~variable, we set $r := s$ and obtain the required inequality
  from the fact that $\he{r} = \he{s} = 0$.
  Now assume that $s = f(s_1,\dots,s_k)$ for some $k \in \N_0$, some
  terms $s_1,\dots,s_k$ and a~basic operation $f$ of $\al B$. Then
  $\he{s_i} \leq \he{s} - 1$ for all $i$. By the induction hypothesis, we know
  that there are terms $r_1,\dots,r_k$ such that
  \[
    \he{r_i} \leq c_2 \he{s_i} \leq c_2(\he{s} - 1)
    \text { and }
    r_i^{\al A} = s_i^{\al B}.
  \]
  There is also a~term $g$ in the language of ${\al A}$ of height at most $c_2$
  such that
  $g^{\al A} = f^{\al B}$.
  Let  $r = g(r_1,\dots,r_n)$.
  Then
  \(
    r^{\al A} =
      g^{\al A}( r_1^{\al A}, \dots, r_k^{\al A} ) =
      f^{\al B}( s_1^{\al B}, \dots, s_k^{\al B} ) =
    s^{\al B}
  \).
  Moreover, from Lemma~\ref{lem:heightfromthemiddle} we obtain that
  \(
    \he{r} \le \he{g} + \max\{\he{r_i} : i \in \{ 1,\dots,k\} \}
      \leq c_2 + c_2(\he{s} - 1)
      = c_2\he{s} \leq c_2 \He{\al B}{n}
  \).
  This completes the induction step.
  Therefore $\He{\al A}{n} \le c_2 \He{\al B}{n}$.

  For proving the first inequality, we interchange the roles of ${\al A}$ and
  ${\al B}$ to obtain a $c' > 0$ such that for all $n \in \N$,
  $\He{\al B}{n} \le c' \He{\al A}{n}$. Then  the inequality
  is satisfied with
  $c_1 := 1/c'$.
\end{proof}

\begin{corollary}
  Let $\al A$ and $\al B$ be two term equivalent finite algebras
  of finite type. Then there exist positive real numbers $d_1,d_2$ such that for
  all $n \in \N$, 
  \[
    d_1 \log_2 (\Le{\al B}{n}) \le \Le{\al A}{n} \le 2^{d_2 \Le{\al B}{n}}.
  \]
\end{corollary}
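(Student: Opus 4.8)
The plan is to combine two facts that are already available: the length of a term function is sandwiched by powers of its height (Lemma~\ref{lem:general-bound}\eqref{it:3} and~\eqref{it:4}), and the height sequence is robust under change of signature (Lemma~\ref{termequivalent}). Write $m_{\al A}$, $m_{\al B}$ for the maximal arities of the basic operations of $\al A$ and $\al B$, and set $M_{\al A} := \max\{m_{\al A}+1,\,2\}$ and $M_{\al B} := \max\{m_{\al B}+1,\,2\}$; the truncation at $2$ matters only in the degenerate case that an algebra has no operation of arity $\geq 2$, and it guarantees $\log_2 M_{\al A}, \log_2 M_{\al B} \geq 1 > 0$. Note that $\Le{\al C}{n} \leq M_{\al C}^{\He{\al C}{n}}$ for $\al C \in \{\al A, \al B\}$: this is Lemma~\ref{lem:general-bound}\eqref{it:4} when $m_{\al C}\ge 1$, and when $m_{\al C}=0$ the lemma gives $\Le{\al C}{n}\le 1 \le 2^{\He{\al C}{n}}$. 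Finally, fix the constants $c_1,c_2 > 0$ supplied by Lemma~\ref{termequivalent}, so that $c_1 \He{\al B}{n} \leq \He{\al A}{n} \leq c_2 \He{\al B}{n}$ for all $n$.

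For the upper bound, I would chain: $\Le{\al A}{n} \leq M_{\al A}^{\He{\al A}{n}} \leq M_{\al A}^{c_2 \He{\al B}{n}} \leq M_{\al A}^{c_2 \Le{\al B}{n}}$, where the middle inequality uses Lemma~\ref{termequivalent} (the exponent is non-negative) and the last uses Lemma~\ref{lem:general-bound}\eqref{it:3} applied to $\al B$. Rewriting $M_{\al A}^{x} = 2^{(\log_2 M_{\al A})\,x}$ gives $\Le{\al A}{n} \leq 2^{d_2 \Le{\al B}{n}}$ with $d_2 := c_2 \log_2 M_{\al A} > 0$.

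For the lower bound, I would run the analogous chain in the reverse direction. From $\Le{\al B}{n} \leq M_{\al B}^{\He{\al B}{n}}$ we get $\log_2 \Le{\al B}{n} \leq (\log_2 M_{\al B})\,\He{\al B}{n}$; then $\He{\al B}{n} \leq \frac{1}{c_1}\He{\al A}{n}$ by Lemma~\ref{termequivalent}, and $\He{\al A}{n} \leq \Le{\al A}{n}$ by Lemma~\ref{lem:general-bound}\eqref{it:3} applied to $\al A$. Combining these three estimates yields $\log_2 \Le{\al B}{n} \leq \frac{\log_2 M_{\al B}}{c_1}\,\Le{\al A}{n}$, i.e. the first inequality of the corollary holds with $d_1 := c_1/\log_2 M_{\al B} > 0$.

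I do not expect any genuine obstacle here: the substantive work has already been done in Lemmas~\ref{lem:general-bound} and~\ref{termequivalent}, and this corollary is a bookkeeping consequence. The only points requiring a little care are keeping every constant strictly positive (the reason for truncating the base at $2$) and resisting the temptation to prove a symmetric statement: the asymmetry between a logarithm on one side and an exponential on the other is forced, because translation between signatures is controlled only linearly at the level of \emph{height}, whereas converting from height back to length costs an exponential.
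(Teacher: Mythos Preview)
Your proof is correct and follows essentially the same route as the paper: both chain Lemma~\ref{lem:general-bound}\,\eqref{it:4}, Lemma~\ref{termequivalent}, and Lemma~\ref{lem:general-bound}\,\eqref{it:3} to get the upper bound, and then run the analogous estimate with the roles of $\al A$ and $\al B$ swapped for the lower bound. Your explicit truncation $M_{\al C}:=\max\{m_{\al C}+1,2\}$ to handle the degenerate case $m_{\al C}=0$ is in fact slightly more careful than the paper, which tacitly assumes the maximal arity is at least~$1$.
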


\begin{proof}
  Let $c_2$ be the constant from the previous lemma, i.e., we have $\He{\al A}n
  \leq c_2\He{\al B}n$ for all $n\in \mathbb N$.  Let $m$ be the maximal arity
  of operation symbols of $\al A$, and let $d_2 := c_2 \log_2 (m + 1)$.
  By Lemma~\ref{lem:general-bound}\,\eqref{it:4}, we have
  \(
    \Le{\al A}{n} \le (m+1)^{\He{\al A}{n}}.
  \)
  Using the inequality on the height, we obtain
  \[
    (m+1)^{\He{\al A}{n}} \le (m+1)^{c_2 \He{\al B}{n}}
      = 2^{d_2 \He{\al B}{n}}.
  \]
  Now by Lemma~\ref{lem:general-bound}~\eqref{it:3}, we have $2^{d_2 \He{\al
  B}{n}} \le 2^{d_2 \Le{\al B}{n}}$, and therefore the second inequality holds.
  For proving the first inequality, we interchange the roles of ${\al A}$ and
  ${\al B}$ to obtain a $d' > 0$ such that for all $n \in \N$, $\Le{\al B}{n}
  \le 2^{d' \He{\al A}{n}}$.
  Hence $\log_2 (\Le{\al B}{n}) \le d'\He{\al A}{n}$. Hence the required
  inequality is satisfied with $d_1 := 1/d'$.
\end{proof}

We are not aware of finite term equivalent algebras $\al A$ and $\al B$
of finite type such that there is a positive real number $c$ with
$\Le{\al A}{n} \ge 2^{c \Le{\al B}{n}}$ for all $n \in \N$.
A natural example that the change of signatures can make it significantly easier
to write down certain functions is given by the commutator operation in a group.
Let  $\al A = \al A_4$ to be the alternating group on four elements, and let
$\al B$ be the algebra obtained from $\al A$ by adding a~single binary operation
$[{\cdot},{\cdot}]$ expressing the commutator of the two elements, i.e., $[x,y]
= x^{-1}y^{-1}xy$. Observe that while the $n$-ary iterated commutator $t_n =
[\dots\![[x_1,x_2],x_3],\dots x_n]$ has linear length in $n$, the corresponding
term of $\al A$ (the one obtained by simply substituting the definition of the
commutator for each of its appearance) has length more than $2^n$.
Unfortunately, we are unable to prove that the term functions $t_n^{\al B}$
cannot be represented by terms in the language of ${\al A}$  whose length would
be bounded by a polynomial in $n$.  Nevertheless, using \cite{HS:EAES} and under
the assumption that $\mathrm{P} \neq \mathrm{NP}$, such terms cannot be produced
in polynomial time:

\begin{proposition}
  Let ${\al A} := (A_4, \cdot, \mbox{}^{-1}, 1)$ be the alternating group
  on four letters, let ${\al B} := (A_4, \cdot, \mbox{}^{-1}, 1, [.,.])$
  be its expansion with the commutator operation $[x,y] := x^{-1}y^{-1} x y$,
  let $t_1 = x_1$, and for $n \in \N$, let $t_n := [t_{n-1}, x_n]$.
  If $\mathrm{P} \neq \mathrm{NP}$, then there is no algorithm which, given $n$,
  produces a~term $s_n$ in the language of $\al A$ such that ${s}_n^{\al A} =
  t_n^{\al B}$ and which runs in polynomial time in $n$.
\end{proposition}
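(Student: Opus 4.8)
The plan is to argue by contradiction. Assume there is an algorithm $\mathcal{M}$ which, given $n$, outputs a term $s_n$ in the language of $\al A$ with $s_n^{\al A}=t_n^{\al B}$ and which runs in time polynomial in $n$. A first, routine, observation is that then $\len{s_n}$ is polynomial in $n$: an algorithm running in polynomial time cannot write down a term of superpolynomial length. Fix a polynomial $p$ with $\len{s_n}\le p(n)$ for all $n$. The goal is to combine $\mathcal{M}$ with a hardness result from \cite{HS:EAES} to conclude $\mathrm{P}=\mathrm{NP}$.

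The hardness input is the following. From \cite{HS:EAES} we use that the equivalence problem for $\al B=(A_4,\cdot,{}^{-1},1,[.,.])$ --- given terms $u,u'$ in the language of $\al B$, decide whether $u^{\al B}=(u')^{\al B}$ --- is $\mathrm{coNP}$-hard, and, crucially, that its hardness survives when the commutator is restricted to occur only as the iterated, left-nested commutator; that is, it stays $\mathrm{coNP}$-hard for instances in which $u=t_n(w_1,\dots,w_n)$ and $u'$ is a term in the language of $\al A$, where $w_1,\dots,w_n$ are terms in the language of $\al A$ such that $n$ and $\sum_j\len{w_j}$ are bounded by a polynomial in the instance size. (Such restricted instances still carry the hardness because, modulo the Klein normal subgroup, an iterated commutator in $A_4$ evaluates to a product in $\mathbb{F}_4$ governed by the $\mathbb{Z}_3$-images of its arguments, and such products are expressive enough to encode, e.g., $3$-colourability.) On the easy side we use that the equivalence problem for the \emph{plain} group $\al A$ is solvable in polynomial time.

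The reduction is now immediate. Given a restricted instance $(t_n(w_1,\dots,w_n),u')$, run $\mathcal{M}$ on $n$ to obtain $s_n$, and output the instance $(s_n(w_1,\dots,w_n),u')$, now entirely in the language of $\al A$. Since $s_n^{\al A}=t_n^{\al B}$ and $\al A,\al B$ have the same universe, the substituted terms induce the same function, $(s_n(w_1,\dots,w_n))^{\al A}=(t_n(w_1,\dots,w_n))^{\al B}$, so the equivalence question is preserved. Moreover $\len{s_n(w_1,\dots,w_n)}\le\len{s_n}\cdot(1+\max_j\len{w_j})\le p(n)\cdot(1+\max_j\len{w_j})$ is polynomial in the instance size, and this term is produced in polynomial time from the output of $\mathcal{M}$. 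Deciding the new instance with the polynomial-time algorithm for equivalence over $\al A$ therefore decides the original one in polynomial time, so a $\mathrm{coNP}$-hard problem lies in $\mathrm{P}$, which forces $\mathrm{P}=\mathrm{NP}$.

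I expect the real work to be the interface with \cite{HS:EAES}: one must verify that its hardness reduction can be arranged to output only instances of the left-nested form $t_n(w_1,\dots,w_n)$ with the $w_j$ in the language of $\al A$, since an arbitrary $\al B$-term has a branching commutator tree whereas $\mathcal{M}$, as hypothesised, rewrites only this single family. If instead the available reduction yields commutator expressions built to bounded depth from such left-nested blocks, one applies $\mathcal{M}$ a bounded number of times from the inside out, each pass multiplying lengths by a polynomial factor and so keeping the total polynomial; only a genuinely unbounded branching structure would require more. A secondary, minor point is to pin down precisely which equivalence (or equation-solving) problem over $A_4$ is known to be in $\mathrm{P}$ and to confirm that the instances produced by the reduction fall within that class.
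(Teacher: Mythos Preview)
Your approach is essentially the one the paper uses: assume the algorithm exists, plug $s_n$ in for the iterated commutator $t_n$ in the hard instances from \cite{HS:EAES}, and collapse an NP-hard (or coNP-hard) problem over $\al B$ to the corresponding polynomial-time problem over $\al A$. The one substantive difference is that the paper works with the \emph{equation solvability} problem rather than the equivalence problem: \cite[Theorem~6]{HS:EAES} gives equation solvability for $\al A$ in $\mathrm P$, and \cite[Theorem~13]{HS:EAES} reduces $3$-colourability to equation solvability for $\al B$. This choice also resolves your ``interface'' worry directly: the $3$-colourability reduction in \cite{HS:EAES} produces, for a graph $\Gamma$ with $k$ edges, precisely the term
\[
  t_\Gamma(x_1,x_2,y_1,\dots,y_n)=[\dots[[x_1,x_2],g_{e_1}],\dots,g_{e_k}]
  = t_{k+2}(x_1,x_2,g_{e_1},\dots,g_{e_k}),
\]
with $g_{(v_i,v_j)}=y_i y_j^{-1}$ already in the group language. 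So the hard instances are literally of the left-nested form $t_m(w_1,\dots,w_m)$ with $w_j$ in the language of $\al A$; no bounded-depth unpacking is needed. One then sets $s_\Gamma := s_{k+2}(x_1,x_2,g_{e_1},\dots,g_{e_k})$ and checks solvability of $s_\Gamma\approx a$ over $\al A$, giving a polynomial-time algorithm for $3$-colourability.
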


\begin{proof}
  We derive this result from results of \cite{HS:EAES}: the equation solvability
  problem for $\al A$ is in $\mathrm{P}$ \cite[Theorem 6]{HS:EAES}.
  Furthermore, we will use their reduction of $3$-colorability to the equation
  solvability problem for $\al B$ \cite[Theorem 13]{HS:EAES}.

  Suppose that there is a~polynomial time algorithm producing $s_n$.  We will
  use this algorithm to reduce $3$-colorability to equation solvability in $\al
  A$. To this end, let $\Gamma = (V,E)$ be a~graph with $n$ vertices and $k$
  edges, and let $V = \{v_1,\dots,v_n\}$ and $E = \{e_1,\dots,e_k\}$.  In
  \cite{HS:EAES}, the authors produce a term $t_\Gamma$ in the language of $\al
  B$ of polynomial length in $n$ and $a \in A_4$ such that $t_\Gamma \equals a$
  has a solution for $a\neq 1$ if and only if the graph $\Gamma$ is
  $3$-colorable. This term is defined by
  \[
    t_{\Gamma}( x_1, x_2, y_1, \dots, y_n ) =
    [\dots\![[x_1, x_2], g_{e_1}], \dots \dots, g_{e_k}],
  \]
  where $g_{(v_i,v_j)} = y_i^{\vphantom{-1}}y_j^{-1}$.
  The term $t_{\Gamma}$ is of a~polynomial length in $n$ as a~term of $\al B$.
  Using this term, Horv\'{a}th and Szab\'{o} reduce
  $3$-colorability to equation solvability in $\al B$.
  We have to do one more step to reduce it to equation solvability in $\al A$, that
  is find a~polynomial length term in the language of groups. For that we run
  the given algorithm to produce a~term $s_{k+2}$ in the language of $\al A$
  such that $s_{k+2}^{\al A} =t_{k+2}^{\al B}$. Note that this term has to
  be of polynomial length in $k$ since it is produced by a~polynomial time
  algorithm.
  Finally, let
  \[
    s_{\Gamma}( x_1, x_2, y_1, \dots, y_n ) =
    s_{k+2}(x_1, x_2, g_{e_1}, \dots \dots, g_{e_k}).
  \]
  This term can be computed from $s_{k+2}$ in linear time, and since
  $s_\Gamma^{\al A} = t_\Gamma^{\al B}$, the equation $t_\Gamma
  \equals a$ has a solution in $A_4$ if and only if $s_\Gamma \equals a$ does.
  Hence $\Gamma$ is $3$-colorable if and only if
  $s_{\Gamma} \equals a$ is solvable. 
  This completes
  the reduction from 3-colorability to equation solvability in $\al A$.
  By \cite[Theorem 6]{HS:EAES}, equational solvability in $\al A$ is in $\mathrm{P}$.
  Altogether, we have produced a polynomial time algorithm for
  $3$-colorability. Since $3$-colorability is $\mathrm{NP}$-complete  \cite{Ka:RACP},
  this contradicts the assumption $\mathrm{P} \neq \mathrm{NP}$.
  Therefore, assuming $\mathrm{P} \neq \mathrm{NP}$, such an algorithm
  producing $s_n$ does not exist.
 \end{proof}

We will also use the following elementary lemma.
\begin{lemma}\label{lem:recursion}
  Let $f\colon \mathbb N \to \mathbb R$ be a~non-decreasing function, let $n_0 \in
  \mathbb N$, $0 < c < 1$, and $d > 0$ such that $c n_0 \ge 1$ and
  for all $n \geq n_0$ we
  have
  \(
    f( n ) \leq f( \lfloor cn \rfloor ) + d.
  \)
  Then for all $n \in \N$, we have
  \(
    f( n ) \leq d\log_{1/c} n + f(n_0).
  \)
\end{lemma}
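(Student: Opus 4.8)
The plan is to prove Lemma~\ref{lem:recursion} by iterating the recursive inequality $f(n) \le f(\lfloor cn \rfloor) + d$ until the argument drops below $n_0$, then bounding the number of iterations. The intuition is that each application of the recursion multiplies the argument by (roughly) $c < 1$, so after about $\log_{1/c} n$ steps we reach the base case, accumulating an additive $d$ each time.

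First I would handle the trivial case $n < n_0$: since $f$ is non-decreasing and $n \le n_0$... actually more carefully, if $n < n_0$ then $f(n) \le f(n_0)$ (as $f$ is non-decreasing), and the right-hand side $d\log_{1/c} n + f(n_0)$ is at least $f(n_0)$ provided $\log_{1/c} n \ge 0$, i.e. $n \ge 1$, which holds since $n \in \N$. (One subtlety: we need $\log_{1/c} n \ge 0$; since $1/c > 1$ and $n \ge 1$, this is fine.) So the statement holds for $n < n_0$.

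For $n \ge n_0$, I would induct on $n$ (strong induction). Given $n \ge n_0$, apply the hypothesis to get $f(n) \le f(\lfloor cn \rfloor) + d$. The key point is that $\lfloor cn \rfloor < n$ — which holds because $c < 1$ and $cn \ge cn_0 \ge 1$, so $\lfloor cn \rfloor \le cn < n$ and $\lfloor cn\rfloor \ge 1$ — so the induction hypothesis applies to $\lfloor cn \rfloor$, giving $f(\lfloor cn \rfloor) \le d\log_{1/c}(\lfloor cn \rfloor) + f(n_0)$. Combining, $f(n) \le d\log_{1/c}(\lfloor cn\rfloor) + f(n_0) + d$. It then remains to check that $\log_{1/c}(\lfloor cn \rfloor) + 1 \le \log_{1/c} n$, equivalently $(1/c)\cdot \lfloor cn \rfloor \le n$, i.e. $\lfloor cn \rfloor \le cn$, which is immediate from the definition of the floor function. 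This closes the induction.

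The main obstacle — really the only thing requiring care — is making sure the recursion actually terminates and the induction is well-founded: one must use the hypothesis $cn_0 \ge 1$ to guarantee that $1 \le \lfloor cn \rfloor < n$ for all $n \ge n_0$, so that the argument strictly decreases while staying a positive integer. The logarithm manipulations and the base case are routine. I would present the base case and inductive step in the order above, flagging the $\lfloor cn \rfloor < n$ and $\lfloor cn \rfloor \ge 1$ observations explicitly since they are where the hypotheses on $c$ and $n_0$ are used.
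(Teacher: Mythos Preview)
Your proposal is correct and follows essentially the same approach as the paper: strong induction on $n$, with the base case $n \le n_0$ handled by monotonicity of $f$ and non-negativity of $\log_{1/c} n$, and the inductive step combining the recursive inequality with the observation that $\log_{1/c}(\lfloor cn\rfloor) + 1 \le \log_{1/c}(cn) + 1 = \log_{1/c} n$. You are slightly more explicit than the paper in verifying $1 \le \lfloor cn \rfloor < n$ (which is indeed where the hypothesis $cn_0 \ge 1$ is used), but the argument is otherwise identical.
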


\begin{proof}
  We will prove the claim by induction on $n$. For $n \leq n_0$, the statement
  follows from monotonicity of $f$. For the induction step, suppose that $n >
  n_0$ and the statement is true for all $m < n$. We obtain
  \begin{multline*}
    f( n ) \leq f( \lfloor cn \rfloor ) + d
    \leq d\log_{1/c} (\lfloor cn \rfloor ) + f(n_0) + d \\
    \leq d( 1 + \log_{1/c} ( cn ) ) + f(n_0)
    = d\log_{1/c} n + f(n_0)
  \end{multline*}
  from the induction hypothesis and the monotonicity of the logarithm.
\end{proof}

\section{Primal algebras} \label{sec:primal}

In this section, we will consider sums $x_1 + x_2 + \dots + x_n$, where $+$ is
some binary operation.  These sums can be parenthesized in various ways. We will
put the parentheses in a way that yield a balanced binary tree.

\begin{definition}
  Let $t$ be a binary symbol. We define the sequence $(\sigma_n^t)_{n\in\N}$ of
  terms in the language $\{t\}$ by $\sigma_1^t(x_1):=x_1$ and
  \[
    \sigma_n^t(x_1,\dots,x_n) := 
    t( \sigma_{\lceil n/2 \rceil}^t(x_1,\dots,x_{\lceil n/2 \rceil}),
      \sigma_{\lfloor n/2 \rfloor}^t(x_{\lceil n/2 \rceil +1},\dots,x_n) )
  \]
  for $n \ge 2$.
\end{definition}

\begin{lemma}\label{allbutone}
 Let $t$ be a binary symbol in the language of an algebra $\al A$ and $0\in A$
 such that $t^{\al A}(x,0) = t^{\al A}(0,x) = x$ for all $x\in A$. Then for all
 $n\in\N$ and $x \in A$, we have $({\sigma_n^t})^{\al A}(0,\dots,0,x,0,\dots,0)
 = x$.
\end{lemma}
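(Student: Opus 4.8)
The plan is to prove the statement by induction on $n$, but first to dispose of the auxiliary fact that $(\sigma_m^t)^{\al A}(0,\dots,0) = 0$ for every $m \in \N$. This is itself an easy induction on $m$: for $m = 1$ it is immediate from $\sigma_1^t(x_1) = x_1$, and for $m \ge 2$ one has $\lfloor m/2 \rfloor \ge 1$, so the recursive definition together with the induction hypothesis gives
\[
  (\sigma_m^t)^{\al A}(0,\dots,0)
  = t^{\al A}\bigl( (\sigma_{\lceil m/2 \rceil}^t)^{\al A}(0,\dots,0),\;
      (\sigma_{\lfloor m/2 \rfloor}^t)^{\al A}(0,\dots,0) \bigr)
  = t^{\al A}(0,0) = 0,
\]
where the last equality is the instance $x = 0$ of the hypothesis $t^{\al A}(x,0) = x$ (so in particular $t^{\al A}(0,0) = 0$ is a consequence of, not an addition to, the assumptions).

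For the main claim, fix $n$ and the position $i \in \{1,\dots,n\}$ of the distinguished variable, and let $\tup a = (0,\dots,0,x,0,\dots,0)$ be the tuple with $x$ in coordinate $i$ and $0$ elsewhere. The case $n = 1$ forces $i = 1$ and $(\sigma_1^t)^{\al A}(x) = x$. For $n \ge 2$, the recursive definition of $\sigma_n^t$ splits $\tup a$ into a first block of length $\lceil n/2 \rceil$ and a second block of length $\lfloor n/2 \rfloor$, and the unique non-zero entry lies in exactly one of the two blocks. If $i \le \lceil n/2 \rceil$, the first block equals $(0,\dots,0,x,0,\dots,0)$ with $x$ in coordinate $i$, so by the induction hypothesis (valid because $\lceil n/2 \rceil < n$) the term $\sigma_{\lceil n/2 \rceil}^t$ evaluates to $x$ on it, while the second block is all zeros and, by the auxiliary fact, $\sigma_{\lfloor n/2 \rfloor}^t$ evaluates to $0$; hence $(\sigma_n^t)^{\al A}(\tup a) = t^{\al A}(x,0) = x$. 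If $i > \lceil n/2 \rceil$, the symmetric argument applies: the distinguished entry now sits in coordinate $i - \lceil n/2 \rceil$ of the second block, so $\sigma_{\lfloor n/2 \rfloor}^t$ evaluates to $x$ there (using $\lfloor n/2 \rfloor < n$ and $\lfloor n/2 \rfloor \ge 1$), $\sigma_{\lceil n/2 \rceil}^t$ evaluates to $0$ on the all-zero first block, and $(\sigma_n^t)^{\al A}(\tup a) = t^{\al A}(0,x) = x$.

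I do not expect a genuine obstacle here; this is a routine structural induction over the balanced binary tree defining $\sigma_n^t$. The only points deserving a moment's care are verifying that $\lfloor n/2 \rfloor \ge 1$ for all $n \ge 2$, so that both halves really are strictly smaller instances to which the induction hypothesis (and the auxiliary fact) apply, and keeping track of which block the distinguished coordinate falls into so that the correct one of the two unit laws $t^{\al A}(x,0) = x$ and $t^{\al A}(0,x) = x$ is invoked.
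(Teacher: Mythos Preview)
Your proof is correct and follows essentially the same inductive argument as the paper. The only cosmetic difference is that the paper absorbs your auxiliary fact $(\sigma_m^t)^{\al A}(0,\dots,0)=0$ into the induction hypothesis itself, as the instance $x=0$ of the lemma, rather than proving it separately beforehand.
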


\begin{proof}
  By induction on $n$. For $n=1$ the statement is obviously true by the
  definition of $(\sigma_n^t)_{n\in\N}$. Let $n\geq2$, and let $x \in A$.  We
  first consider the case that $x$ appears at place $i$ with $i > \lceil n/2
  \rceil$. Then the equalities ${\sigma_{\lfloor n/2\rfloor}^t}^{\al
  A}(0,\dots,0)=0$ and ${\sigma_{\lceil n/2\rceil}^t}^{\al
  A}(0,\dots,0,x,0,\dots,0) = x$ follow from the induction hypothesis. Using the
  assumption $t^{\al A}(0,x)=x$, we obtain $({\sigma_n^t})^{\al
  A}(0,\dots,0,x,0,\dots,0)=x$.  The case $i \le \lceil n/2 \rceil$ is done
  similarly.
\end{proof}

The height of a~binary balanced tree can be determined from its number of leaves.
In our setting, this means:
\begin{lemma}\label{heightofsigma} Let $t$ be a~binary symbol. Then
  $\he{\sigma_n^t} = \lceil \log_2 n \rceil$.
\end{lemma}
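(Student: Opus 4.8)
The plan is to prove $\he{\sigma_n^t} = \lceil \log_2 n \rceil$ by induction on $n$, following the recursive definition of $\sigma_n^t$. The base case $n = 1$ is immediate since $\sigma_1^t = x_1$ is a variable, so $\he{\sigma_1^t} = 0 = \lceil \log_2 1 \rceil$. For the induction step with $n \ge 2$, I would unfold the definition
\[
  \sigma_n^t = t\bigl( \sigma_{\lceil n/2 \rceil}^t, \sigma_{\lfloor n/2 \rfloor}^t \bigr),
\]
so that by the definition of height,
\[
  \he{\sigma_n^t} = 1 + \max\{ \he{\sigma_{\lceil n/2 \rceil}^t}, \he{\sigma_{\lfloor n/2 \rfloor}^t} \}.
\]
Since $\lceil n/2 \rceil \ge \lfloor n/2 \rfloor$ and the quantity $\lceil \log_2 m \rceil$ is non-decreasing in $m$, the induction hypothesis gives that the maximum on the right equals $\he{\sigma_{\lceil n/2 \rceil}^t} = \lceil \log_2 \lceil n/2 \rceil \rceil$. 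It therefore remains to verify the arithmetic identity
\[
  1 + \lceil \log_2 \lceil n/2 \rceil \rceil = \lceil \log_2 n \rceil
\]
for all integers $n \ge 2$.

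To establish this identity I would use the standard fact that for a positive integer $n$, $\lceil \log_2 n \rceil$ is the unique integer $k$ with $2^{k-1} < n \le 2^k$ (for $n \ge 2$; and it is $0$ for $n = 1$). Writing $k = \lceil \log_2 n \rceil$ for $n \ge 2$, so $2^{k-1} < n \le 2^k$, I would check that $\lceil n/2 \rceil$ lies in the range $(2^{k-2}, 2^{k-1}]$ when $k \ge 2$: indeed $n \le 2^k$ gives $\lceil n/2 \rceil \le 2^{k-1}$, and $n > 2^{k-1}$ gives $n \ge 2^{k-1} + 1$, hence $\lceil n/2 \rceil \ge \lceil (2^{k-1}+1)/2 \rceil = 2^{k-2} + 1 > 2^{k-2}$. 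This shows $\lceil \log_2 \lceil n/2 \rceil \rceil = k - 1$, as desired. The remaining corner case is $k = 1$, i.e. $n = 2$: then $\lceil n/2 \rceil = 1$, so $1 + \lceil \log_2 1 \rceil = 1 + 0 = 1 = \lceil \log_2 2 \rceil$, which checks out.

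I expect the only mild obstacle to be bookkeeping with the floor and ceiling functions — in particular making sure the monotonicity argument correctly identifies which of the two subterms achieves the maximal height, and handling the small values of $n$ (especially $n = 2$, where $\lfloor n/2 \rfloor = 1$ and the log is $0$) so that the formula $\lceil \log_2 n \rceil$ behaves correctly at the boundary. Everything else is a routine unfolding of the definitions of $\sigma_n^t$ and of term height together with the elementary characterization of $\lceil \log_2 \cdot \rceil$.
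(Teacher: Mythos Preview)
Your proof is correct and follows essentially the same approach as the paper's: both arguments hinge on the observation that if $2^{k-1} < n \le 2^k$ then $2^{k-2} < \lceil n/2 \rceil \le 2^{k-1}$, so that the height drops by exactly one at each halving. The only cosmetic difference is that you induct on $n$ and then translate via the arithmetic identity $1 + \lceil \log_2 \lceil n/2 \rceil \rceil = \lceil \log_2 n \rceil$, whereas the paper inducts directly on $k = \lceil \log_2 n \rceil$.
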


\begin{proof}
  We prove the statement
  \[
  \forall k \in \N_0 \, \forall n \in \N \,:\,
    2^{k-1} < n \le 2^k \Rightarrow
    \he{\sigma_n^t} = k
    \]
   by induction on $k$.
  It is clearly true for $k \in \{0,1\}$.
  Now assume  that $k > 1$.    Since $2^{k-1} < n \leq 2^k$,
  have $2^{k-2} < n/2 \leq 2^{k-1}$, and therefore,
  $2^{k-2} < \lceil n/2 \rceil \leq 2^{k-1}$ and $\lfloor n/2 \rfloor \leq
  2^{k-1}$. From these inequalities and the induction hypothesis, we obtain that
  $\he{ \sigma^t_{ \lceil n/2 \rceil } } = k - 1$ and
  $\he{ \sigma^t_{ \lfloor n/2 \rfloor } } \leq k - 1$. Hence, from the definition
  of $\sigma_n^t$, we get $\he{ \sigma_n^t } = k$. This completes the induction proof.
  Now we notice that if $2^{k-1} < n \le 2^k$, then $k = \lceil \log_2 n \rceil$,
  which implies the result. 
\end{proof}

\begin{lemma}\label{lem:specialfunctionallycomplete}
  Let $\al A$ be a finite algebra that has binary
  operations $+$ and $\cdot$ as fundamental operations as well as
  characteristic functions $\chi_{a}$ for all $a\in A$ such that
  $x+0=0+x=x$, $x\cdot 1=x$ and $x\cdot 0=0$ for all $x\in A$. If
  all unary constant operations of $\A$ are term functions then there is
  a positive real number $d$ such that $\He{\al A}{n}\leq dn$ for all $n\in\N$.
\end{lemma}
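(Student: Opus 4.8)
The plan is to express every $n$-ary term operation of $\al A$ by a term of height linear in $n$, reducing the number of variables one at a time at a cost of only a constant increase in height per step. Since $\al A$ is finite, $\Clo_k \al A$ is finite and hence each $\He{\al A}{k}$ is a well-defined natural number, so it suffices to control $\He{\al A}{n}$ for large $n$. The key consequence of the hypotheses is an \emph{arity reduction}: for every $n$-ary term $t$ with $n \ge 2$ and every $a \in A$, the map $(x_1,\dots,x_{n-1}) \mapsto t^{\al A}(x_1,\dots,x_{n-1},a)$ is an $(n-1)$-ary term operation of $\al A$, namely the one induced by $t(x_1,\dots,x_{n-1},s_a(x_1))$, where $s_a$ is a unary term with $s_a^{\al A}$ the constant operation of value $a$; such an $s_a$ exists precisely because all unary constant operations of $\al A$ are term operations. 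Write $g_a$ for this $(n-1)$-ary term operation.

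The core step is to prove that $\He{\al A}{n} \le \He{\al A}{n-1} + K$ for all $n \ge 2$, where $K := \lceil \log_2 |A| \rceil + 2$. Fix an $n$-ary term $t$, and for each $a \in A$ pick an $(n-1)$-ary term $r_a$ in the variables $x_1,\dots,x_{n-1}$ with $r_a^{\al A} = g_a$ and $\he{r_a} \le \He{\al A}{n-1}$. Writing $A = \{a_1,\dots,a_{|A|}\}$, set
\[
  s := \sigma_{|A|}^{+}\bigl( r_{a_1} \cdot \chi_{a_1}(x_n), \dots, r_{a_{|A|}} \cdot \chi_{a_{|A|}}(x_n) \bigr).
\]
I would verify $s^{\al A} = t^{\al A}$ by evaluating at a point $b = (b_1,\dots,b_n)$: recalling that $\chi_{c}^{\al A}(c) = 1$ and $\chi_{c}^{\al A}(c') = 0$ for $c' \ne c$, the identities $x \cdot 1 = x$ and $x \cdot 0 = 0$ show that the $i$-th argument of $\sigma_{|A|}^{+}$ evaluates to $g_{a_i}(b_1,\dots,b_{n-1}) = t^{\al A}(b)$ if $a_i = b_n$ and to $0$ otherwise; since $x + 0 = 0 + x = x$, Lemma~\ref{allbutone}, applied to $+$, then yields $s^{\al A}(b) = t^{\al A}(b)$ (the case $t^{\al A}(b) = 0$ being included).

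It remains to bound the height of $s$. We have $\he{\chi_{a_i}(x_n)} = 1$, hence $\he{r_{a_i} \cdot \chi_{a_i}(x_n)} \le 1 + \max\{\He{\al A}{n-1}, 1\}$, and by Lemmas~\ref{lem:heightfromthemiddle} and~\ref{heightofsigma},
\[
  \he{s} \le \he{\sigma_{|A|}^{+}} + \max_i \he{r_{a_i} \cdot \chi_{a_i}(x_n)}
  \le \lceil \log_2 |A| \rceil + 1 + \max\{\He{\al A}{n-1}, 1\} \le \He{\al A}{n-1} + K .
\]
Since $t$ was an arbitrary $n$-ary term, this establishes the recursion; unwinding it by induction gives $\He{\al A}{n} \le \He{\al A}{1} + (n-1) K$ for all $n \in \N$, so the lemma holds with $d := \He{\al A}{1} + K > 0$. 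I do not expect a genuine obstacle here: the one point that needs care is the arity-reduction observation, which relies on the hypothesis that unary constants are term operations, together with the fact that $\cdot$ is only assumed right-unital and right-absorbing ($x\cdot 1 = x$, $x\cdot 0 = 0$) — this is why each selector $\chi_{a_i}(x_n)$ must occur as the \emph{right} factor of $\cdot$. Everything else is bookkeeping with the height lemmas already proved.
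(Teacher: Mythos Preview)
Your proof is correct, but it follows a genuinely different route from the paper's. The paper gives a one-shot Lagrange-style representation: for an $n$-ary term operation $f$ it writes down the explicit term
\[
  \sigma_{|A|^n}^{+}\bigl(f(\alpha)\cdot\chi_{\alpha_1}(x_1)\cdots\chi_{\alpha_n}(x_n)\bigr)_{\alpha\in A^n},
\]
where the $f(\alpha)$ are (terms for) constants, and then bounds the height directly as $\lceil\log_2|A|^n\rceil$ plus the height of a single summand, which is at most $1+\max\{s,n\}$ with $s$ the maximal height of a constant term. You instead peel off one variable at a time, establishing the recursion $\He{\al A}{n}\le\He{\al A}{n-1}+K$ via
\[
  t^{\al A}(x_1,\dots,x_n)=\Bigl(\sigma_{|A|}^{+}\bigl(r_a\cdot\chi_a(x_n)\bigr)_{a\in A}\Bigr)^{\al A},
\]
where each $r_a$ is a minimal-height representative of the $(n-1)$-ary operation $t^{\al A}(\,\cdot\,,a)$. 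Both arguments rest on the same auxiliary lemmas (Lemmas~\ref{allbutone}, \ref{heightofsigma}, \ref{lem:heightfromthemiddle}) and both need the hypothesis on unary constants---the paper to name the values $f(\alpha)$, you to exhibit each $g_a$ as a term operation. The paper's version has the advantage of producing a single explicit term; your recursive version is arguably cleaner and, because it re-optimizes the representative at every step, could in principle yield a smaller constant, though asymptotically both give $\He{\al A}{n}=O(n)$. Your remark that $\chi_a(x_n)$ must sit on the right of $\cdot$ (since only $x\cdot 1=x$ and $x\cdot 0=0$ are assumed) is a nice point that the paper uses silently.
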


\begin{proof}
  Let $n \in \N$, let $|A|=m$, $m\in\N$, let $A=\{a_1,\dots,a_m\}$ and let us
  denote $\Pi_{i=1}^n\chi_{\alpha_i}(x_i) =
  (\dots(\chi_{\alpha_1}(x_1)\cdot\chi_{\alpha_2}(x_2))\cdot\ldots) \cdot
  \chi_{\alpha_n}(x_n)$ by $\chi_{\alpha_1}(x_1)\cdots\chi_{\alpha_n}(x_n)$ for
  all $\alpha_1,\dots,\alpha_n\in A$ and $x_1,\dots,x_n\in A$. By Lemma
  \ref{allbutone} we have that arbitrary term function $f$ can be represented as
  \begin{equation}\label{eq:0}
    {\sigma_{m^n}^+}(f(a_1,\ldots,a_1)\cdot\chi_{a_1}(x_1)\cdots\chi_{a_1}(x_n),
      \ldots,f(a_m,\ldots,a_m)\cdot\chi_{a_m}(x_1)\cdots\chi_{a_m}(x_n)).
  \end{equation} 
  We note that $\he{x+y}=\he{x\cdot y}=1$ and $\he{\chi_{a_i}(x)}=1$ for all
  $i\in\{1,\dots,m\}$. Let $s$ be the maximal height of the terms that represent
  constant functions.  Now we can calculate the height of \eqref{eq:0}. Using
  Lemma \ref{heightofsigma}, we have $\he{\sigma_{m^n}^+}\leq
  \lceil\log_2|A|^n\rceil$. Using the definition of the  height we obtain
  \[
    \he{f(\alpha_1,\ldots,\alpha_n)\cdot\Pi_{i=1}^n\chi_{\alpha_i}(x_i)} =
    1 + \max\{ \he{f(\alpha_1,\ldots,\alpha_n)},
      \he{\Pi_{i=1}^n\chi_{\alpha_i}(x_i)} \}
  \]
  and $\he{\Pi_{i=1}^n\chi_{\alpha_i}(x_i)} = n$ for all
  $\alpha_1,\dots,\alpha_n\in A$. Hence, for all $n \in \N$, we have 
  \[
    \he{f(\alpha_1,\ldots,\alpha_n)\cdot\Pi_{i=1}^n\chi_{\alpha_i}(x_i)}
      \leq 1 + \max\{s,n\}.
  \]
  Hence there is $c\in\N$ such that
  $\he{f(\alpha_1,\ldots,\alpha_n)\cdot\Pi_{i=1}^n\chi_{\alpha_i}(x_i)}\leq cn$
  for all $n \in \N$.  Therefore, for each $n \in \N$,  by
  Lemma~\ref{lem:heightfromthemiddle} the height of \eqref{eq:0} is at  most
  $\lceil\log_2|A|^n\rceil+cn$. There is a positive real $d$ such that for all
  $n \in \N$, $\lceil\log_2|A|^n\rceil+ cn \le dn$. Hence $\He{\A}{n}\leq dn$.
\end{proof}

\begin{proposition}\label{thm:functionallycomplete}
  Let $\al A=(A,F)$ be a finite  primal algebra of finite type, and let
  $n\in\N$. Then there is a positive real number $c$ such that $\Le{\al A}{n}
  \leq 2^{cn}$ for all $n \in \N$.
\end{proposition}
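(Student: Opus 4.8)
The plan is to reduce the statement to Lemma~\ref{lem:specialfunctionallycomplete} by replacing the basic operations of $\al A$ with a more convenient but term equivalent set, and then to turn the resulting bound on the \emph{height} into a bound on the \emph{length} via Lemma~\ref{termequivalent} and Lemma~\ref{lem:general-bound}\,\eqref{it:4}. First I would dispose of the trivial case $|A| = 1$, where every $n$-ary term function is constant and hence induced by the term $x_1$ of length $1$. So assume $|A| \ge 2$ and fix two distinct elements $0, 1 \in A$. Since $\al A$ is primal, \emph{every} finitary operation on $A$ is a term operation of $\al A$, so I am free to pick operations on $A$ realising the hypotheses of Lemma~\ref{lem:specialfunctionallycomplete}: let $+$ be the addition of a cyclic group structure on $A$ with neutral element $0$ (so $x + 0 = 0 + x = x$), let $\cdot$ be any binary operation with $x \cdot 1 = x$ and $x \cdot 0 = 0$ for all $x \in A$ (such an operation exists because $0 \neq 1$, so these prescriptions do not clash), let $\chi_a$ for $a \in A$ be the characteristic function with $\chi_a(a) = 1$ and $\chi_a(x) = 0$ for $x \neq a$, and let $c_a$ for $a \in A$ be the unary constant operation with value $a$. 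Put $\al B := (A; +, \cdot, (\chi_a)_{a \in A}, (c_a)_{a \in A})$, which is a finite algebra of finite type.

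Next I would argue that $\al A$ and $\al B$ are term equivalent. On the one hand, each basic operation of $\al B$ is an operation on $A$, hence a term operation of the primal algebra $\al A$, so $\Clo \al B \subseteq \Clo \al A$. On the other hand, $\al B$ satisfies the hypotheses of Lemma~\ref{lem:specialfunctionallycomplete} (in particular all unary constant operations of $\al B$ are among its basic operations, hence term functions), and the representation~\eqref{eq:0} appearing in the proof of that lemma in fact expresses an \emph{arbitrary} operation $f$ on $A$ as a term operation of $\al B$ — the required constant terms for the values $f(a_i,\dots,a_i)$ being available by construction. Hence $\al B$ is primal, so $\Clo \al A \subseteq \Clo \al B$, and therefore $\Clo \al A = \Clo \al B$.

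Finally I would assemble the bound. Lemma~\ref{lem:specialfunctionallycomplete} applied to $\al B$ yields a positive real $d$ with $\He{\al B}{n} \le d n$ for all $n \in \N$. Since $\al A$ and $\al B$ are term equivalent finite algebras of finite type, Lemma~\ref{termequivalent} gives a positive real $c_2$ with $\He{\al A}{n} \le c_2 \He{\al B}{n} \le c_2 d n$. Letting $m$ denote the maximal arity of an operation symbol of $\al A$ — which is at least $1$, since a primal algebra on more than one element cannot be generated by nullary operations alone — Lemma~\ref{lem:general-bound}\,\eqref{it:4} gives
\[
  \Le{\al A}{n} \le (m+1)^{\He{\al A}{n}} \le (m+1)^{c_2 d n} = 2^{c n},
\]
where $c := c_2\, d \log_2(m+1) > 0$, which is exactly the asserted inequality.

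I expect the only genuinely delicate point to be the verification that $\al B$ is primal, i.e.\ that \eqref{eq:0} really is a legitimate term of $\al B$ computing every given operation $f$ (so that term equivalence with $\al A$ can be invoked); the existence of operations $+$ and $\cdot$ with the stated unit and absorption laws, and the final passage from heights to lengths, are then routine.
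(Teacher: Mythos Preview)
Your proposal is correct and follows essentially the same route as the paper's proof: construct a term-equivalent algebra $\al B$ with the special operations required by Lemma~\ref{lem:specialfunctionallycomplete}, apply that lemma to bound $\He{\al B}{n}$ linearly, transfer the bound to $\He{\al A}{n}$ via Lemma~\ref{termequivalent}, and convert to a length bound via Lemma~\ref{lem:general-bound}\,\eqref{it:4}. The only differences are cosmetic: the paper obtains $\al B$ by citing \cite[Theorem~3.1.5]{KP:PCIA} rather than constructing it by hand, and does not separately treat the case $|A|=1$.
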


\begin{proof}
Let $\al A$ be a primal algebra and let $u$ be the  maximal arity of its
fundamental operations. By \cite[Theorem 3.1.5]{KP:PCIA} every primal algebra
$\al A=(A,F)$ such that $A=\{a_1,\ldots,a_m\}$, is term equivalent to $\mathbf
B=(A,+,\cdot,\chi_{a_1},\ldots,\chi_{a_m},0,1)$ where
$\chi_{a_1},\ldots,\chi_{a_m}$ are characteristic functions, $0$ and $1$ are
elements from $A$ and $+$ and $\cdot$ are binary operations such that
$x+0=0+x=x$, $x\cdot 1=x$, $x\cdot 0=0$ for all $x\in A$ and every constant
function is a term function of $\mathbf B$.  By Lemma \ref{termequivalent},
there is a positive real number $a$ such that $\He{\al A}{n}\leq a\He{\al B}{n}$
for all $n \in \N$.  Using Lemma \ref{lem:specialfunctionallycomplete}, we
obtain a positive real  $d$ such that $\He{\al A}{n}\leq dn$ for all $n \in \N$.
Using Lemma \ref{lem:general-bound} we have $\Le{\A}{n}\leq(u+1)^{dn}$, and thus
we can find a positive real $c$ such that $\Le{\al A}{n}\leq 2^{cn}$ for all $n
\in \N$.
\end{proof}

\begin{proof}[Proof of Theorem \ref{thm:primal}.]
For every primal algebra $\A$ and $n\in\N$ we have $\Fs\A{n}=|A|^{|A|^n}$. Now
the first inequality follows from Lemma \ref{lem:general-bound}\,\eqref{it:1a}.
The second inequality is given in Proposition \ref{thm:functionallycomplete}.
\end{proof}

\section{Supernilpotent algebras} \label{sec:spn}

The notion of supernilpotency was introduced in \cite[Definition~4.1]{AE:EKCN}
for expansions of groups, and in its general form in
\cite[Definition~7.1]{AM:SAOH}. It is closely related to Bulatov's higher
commutators introduced in \cite{Bu:OTNO}.

\begin{definition}
An~algebra $\al A$ is said to be \emph{supernilpotent} of degree $n$ if it
satisfies the commutator identity $[1_{\al A},\dots, 1_{\al A}] = 0_{\al A}$
where the commutator has arity $n+1$.
\end{definition}

The commutator identity in the above definition is described by the
following term condition: for all $i=1,\dots,n+1$, $k_i \in \mathbb N$, $\tup
a_i,\tup b_i \in A^{k_i}$  with $\tup a_i \neq \tup b_i$, and all terms $t$ of
arity $\sum k_i$ that satisfy
\[
  t( \tup x_1,\dots,\tup x_n, \tup a_{n+1} ) =
  t( \tup x_1,\dots,\tup x_n, \tup b_{n+1} )
\]
for all choices of $\tup x_i$'s between $\tup a_i$ and $\tup b_i$ except the case
where all are $\tup b_i$, we have
\[
  t( \tup b_1,\dots,\tup b_n, \tup a_{n+1} ) =
  t( \tup b_1,\dots,\tup b_n, \tup b_{n+1} ).
  \]
Recently, Moorhead \cite{Mo:HCTF} provided a~condition using two terms that is
equivalent to the `one term' condition given by Bulatov in the case that the
algebra lies in a~congruence modular variety. This gives another simple
description of supernilpotency: loosely speaking, for a~fixed $\tup a_i$ and
$\tup b_i$, the value of any term $s$ on the tuple $( \tup b_1,\dots,\tup
b_{n+1} )$ is determined by its values on all other tuples consisting of $\tup
a_i$ and $\tup b_i$ in the right order. The next theorem, which is based on the
results in \cite{Op:ARDO}, shows that this unique value can be obtained as
a~result of a~certain $(2^n-1)$-ary term (a~\emph{strong cube term}
\cite[p.\ 375]{Op:ARDO}) applied to the values of all the other tuples. We
recall that every algebra $\al A$ with a~Mal'cev term $q$ has a~strong $n$-cube
term $q_n$ for every $n>1$, and moreover such a~term can be obtained recursively
from the Mal'cev term by $q_2(x,y,z) = q(y,x,z)$ and
\[
  q_{n+1} (x_0,\dots,x_{2^{n+1}-1}) =
    q_2 ( q_n ( x_0,\dots,x_{2^n-2} ), x_{2^n -1},
      q_n ( x_{2^n},\dots,x_{2^{n+1}-2} )).
\]
Also, by \cite[Lemma~4.1]{Op:ARDO}, for every $n \ge 2$, every algebra with a
strong $n$-cube term has a Mal'cev term.  By a \emph{polynomial term} of the
algebra $\al A$, we understand a term of the algebra $\al A^*$, which is the
expansion of $\al A$ with one nullary constant operation for every element of
$A$. 

\begin{theorem} \label{thm:spn-identities}
  Let $n\geq 2$ and let $\al A$ be an~algebra with a~strong $n$-cube term $q_n$.
  Then the following are equivalent
  \begin{enumerate}
  \item $\al A$ is supernilpotent of degree $n-1$;
  \item \label{it:t2} for all $m_1, \ldots, m_n \in \N$, for all terms $t$ of
    arity $m = m_1 + \dots + m_n$, and for all $\tup a_1,\tup b_1 \in A^{m_1},
    \dots$, $\tup a_n, \tup b_n \in A^{m_n}$, we have
    \[
      q_n^{\al A} \bigl( t^{\al A} ( \tup a_1, \dots, \tup a_n ),
      t^{\al A} ( \tup b_1, \tup a_2, \dots, \tup a_n ),
      \dots,
      t^{\al A} ( \tup a_1, \tup b_2, \dots, \tup b_n ) \bigr) =
      t^{\al A} ( \tup b_1, \tup b_2, \dots, \tup b_n )
    ;\]
  \item for all $n$-ary polynomial terms $t$ and all $a_1, b_1,\dots, a_n, b_n
    \in A$, we have
    \[
      q_n^{\al A}\bigl( t^{\al A}(  a_1, \dots,  a_n ),
      t^{\al A}(  b_1,  a_2, \dots,  a_n ),
      \dots,
      t^{\al A}(  a_1,  b_2, \dots,  b_n ) \bigr) =
      t^{\al A}(  b_1,  b_2, \dots,  b_n ).
    \]
\end{enumerate}
\end{theorem}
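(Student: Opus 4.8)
The plan is to prove the equivalences in the cycle $(1)\Rightarrow(2)\Rightarrow(3)\Rightarrow(1)$. The implication $(2)\Rightarrow(3)$ is the easy one: condition $(3)$ is just the special case of $(2)$ with all $m_i=1$, except that $(3)$ allows $t$ to be a polynomial term rather than a term. To bridge this gap, observe that a polynomial term in the variables $x_1,\dots,x_n$ is obtained from an ordinary term $t'$ of arity $n+\ell$ by substituting fixed constants $c_1,\dots,c_\ell\in A$ for the last $\ell$ variables; so we apply $(2)$ to $t'$ with $m_1=1,\dots,m_{n-1}=1$, $m_n=1+\ell$, and with $\tup a_n=(b_n,c_1,\dots,c_\ell)$ and $\tup b_n=(b_n,c_1,\dots,c_\ell)$—more precisely, we put the genuine $n$-th variable together with the constants in the last block and choose the $\tup a,\tup b$ entries in the constant coordinates to be equal. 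Since changing the constant coordinates does nothing, the $q_n$-identity of $(2)$ collapses exactly to the polynomial-term identity of $(3)$.

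For $(1)\Rightarrow(2)$ the idea is an induction (or a direct substitution argument) reducing the multi-variable block identity to the defining term condition of supernilpotence of degree $n-1$. Fix $t$, the blocks $\tup a_i,\tup b_i$, and abbreviate by $v_S := t^{\al A}$ evaluated with block $i$ equal to $\tup b_i$ for $i\in S$ and equal to $\tup a_i$ otherwise, for $S\subseteq\{1,\dots,n\}$. The recursive definition of $q_n$ from the Mal'cev term $q$, together with the fact (cited from \cite{Op:ARDO}) that a strong $n$-cube term forces a Mal'cev term, lets us compute $q_n^{\al A}$ applied to the tuple of all $v_S$ with $S\neq\{1,\dots,n\}$, arranged in the canonical order. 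The key observation is that the term $s(\tup x_1,\dots,\tup x_n) := q_n^{\al A}\bigl(t(\text{various block choices})\bigr)$, viewed as a term in the $m=m_1+\dots+m_n$ variables, satisfies the hypothesis of the term condition defining $[1_{\al A},\dots,1_{\al A}]=0_{\al A}$ with arity $n$: one checks that whenever at least one block $\tup x_i$ equals $\tup a_i$, the Mal'cev/strong-cube identities make $s$ agree with $t$ on the corresponding "face", so the values at $\tup a$'s and $\tup b$'s in those coordinates coincide. Supernilpotence then yields that $s$ takes the same value whether the last block is $\tup a_n$ or $\tup b_n$, and unwinding this equality (again using the strong-cube identities of $q_n$) produces precisely the equation in $(2)$.

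For $(3)\Rightarrow(1)$ we must verify the term condition for the $n$-ary commutator being $0_{\al A}$, i.e. that for all $k_i$, all $\tup a_i\neq\tup b_i\in A^{k_i}$, and all terms $t$ of arity $\sum k_i$ satisfying the stated collection of equations, the two values $t(\tup b_1,\dots,\tup b_{n-1},\tup a_n)$ and $t(\tup b_1,\dots,\tup b_{n-1},\tup b_n)$ agree. The strategy is to feed a suitable polynomial term into $(3)$: fix the block data and, for a single chosen pair of coordinates witnessing $\tup a_i\neq\tup b_i$ in each block, define an $n$-ary polynomial term $\hat t(x_1,\dots,x_n)$ by substituting into $t$ the value $\tup a_i$ or $\tup b_i$ in block $i$ according to whether $x_i$ equals the corresponding $\tup a$- or $\tup b$-coordinate—this can be arranged with characteristic-like polynomial terms or, more cleanly, by noting we only ever evaluate at the two relevant points so a Mal'cev-based interpolation polynomial suffices. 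Applying $(3)$ to $\hat t$ with $a_i,b_i$ the two witnessing scalar values, the hypothesis equations on $t$ say that all but the last argument of $q_n$ equal their "$\tup a$-face" value, and then the strong-cube identity $q_n(y,y,\dots,y,*,\dots)$ simplifications collapse the left-hand side, giving the desired equality $t(\tup b_1,\dots,\tup a_n)=t(\tup b_1,\dots,\tup b_n)$.

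The main obstacle I expect is the bookkeeping in $(1)\Rightarrow(2)$ and $(3)\Rightarrow(1)$: correctly matching the $2^n-1$ arguments of $q_n$ (in the order dictated by its recursive definition) with the $2^n-1$ non-top subsets $S$, and verifying that the auxiliary term $s$ (resp. polynomial term $\hat t$) really satisfies the somewhat asymmetric hypothesis of Bulatov's term condition. The analytic content is light—everything rests on the Mal'cev and strong-$n$-cube identities and on \cite[Lemma~4.1]{Op:ARDO}—but getting the indexing and the "face" collapses exactly right is where the care is needed.
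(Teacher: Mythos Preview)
Your $(2)\Rightarrow(3)$ is exactly what the paper does. The other two implications, however, take a different route from the paper, and $(3)\Rightarrow(1)$ as you sketch it has a genuine gap.

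\textbf{On $(3)\Rightarrow(1)$.} The step ``define an $n$-ary polynomial term $\hat t(x_1,\dots,x_n)$ by substituting into $t$ the value $\tup a_i$ or $\tup b_i$ in block $i$ according to whether $x_i$ equals the corresponding $\tup a$- or $\tup b$-coordinate'' requires, for each coordinate $\ell$ of block $i$, a unary polynomial $p_{i,\ell}$ with $p_{i,\ell}(a_i')=(\tup a_i)_\ell$ and $p_{i,\ell}(b_i')=(\tup b_i)_\ell$. In a Mal'cev algebra the range of the map $p\mapsto (p(a_i'),p(b_i'))$ on unary polynomials is exactly $\Cg(a_i',b_i')$, so such a $p_{i,\ell}$ exists only when $((\tup a_i)_\ell,(\tup b_i)_\ell)\in\Cg(a_i',b_i')$. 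Since the term condition for $[1_{\al A},\dots,1_{\al A}]=0_{\al A}$ allows the block entries to be arbitrary, and since a Mal'cev algebra need not have any pair generating the full congruence (take $\mathbb Z_2\times\mathbb Z_2$, or $\mathbb Z_4$ with $a_i'=0,b_i'=2$), your interpolation can fail. ``Characteristic-like'' polynomials are not available in general either. The paper sidesteps this by \emph{not} verifying the term condition directly: it proves $[\theta_1,\dots,\theta_n]=0_{\al A}$ only for \emph{principal} congruences $\theta_i=\Cg(a_i,b_i)$, where the interpolation is automatic because every $\theta_i$-pair \emph{is} of the form $(p(a_i),p(b_i))$; it then invokes join distributivity of the higher commutator to pass to $[1_{\al A},\dots,1_{\al A}]$. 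Concretely, it rewrites $\Delta(\theta_1,\dots,\theta_n)$ via \cite[Lemma~3.3]{Op:ARDO} as $\{(p(a_1,\dots,a_n),\dots,p(b_1,\dots,b_n)):p\text{ an $n$-ary polynomial}\}$, reads (3) as saying the last coordinate is a function of the others, and concludes with \cite[Theorem~1.2]{Op:ARDO}.

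\textbf{On $(1)\Rightarrow(2)$.} Your plan---build an auxiliary term $s$ and check it satisfies the hypothesis of the term condition---is not fleshed out enough to evaluate; in particular, the claim that ``whenever at least one block $\tup x_i$ equals $\tup a_i$, the strong-cube identities make $s$ agree with $t$'' is doing all the work and is not justified. The paper again routes through the relational machinery: the tuple $(t^{\al A}(\tup a_1,\dots,\tup a_n),\dots,t^{\al A}(\tup b_1,\dots,\tup b_n))$ lies in $\Delta_n=\Delta(1_{\al A},\dots,1_{\al A})$ by definition, and \cite[Lemma~4.2]{Op:ARDO} says its last coordinate is $[1_{\al A},\dots,1_{\al A}]$-related to $q_n$ of the preceding ones; supernilpotence turns this into equality. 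So the heavy lifting is delegated to results already proved in \cite{Op:ARDO}, rather than redone from the bare term condition.
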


The proof heavily relies on the properties of the relation $\Delta
(\alpha_1,\dots,\alpha_n)$.  In the case $\al A$ is a~Mal'cev
algebra, this relation of arity $2^n$ is described
in \cite[Lemma~3.3]{Op:ARDO} by
\begin{multline*}
 \Delta(\alpha_1,\dots,\alpha_n)   =
  \big\{ \big(
    t ( \tup a_1, \dots, \tup a_n ),
    t( \tup b_1, \tup a_2, \dots, \tup a_n ),
    \dots,
    t( \tup b_1, \tup b_2, \dots, \tup b_n )
    \big) \midbar \\
     \text{ for all } i \in \{1,2, \ldots, n\} : \, m_i \in \mathbb N_0, \tup
     a_i,\tup b_i \in A^{m_i}, (\tup a_i,\tup b_i) \in \alpha_i^{m_i}, \\
     \text { and } t\in \Clo_{\sum_{i = 1}^n m_i} \al A
  \big\}.
\end{multline*}

\begin{proof}
To simplify the notation, let $\Delta_n$ denote the relation $\Delta(1_{\al A},
\dots, 1_{\al A})$, where $1_{\al A}$ appears $n$ times, and let $[1]_n$
denote the $n$-ary commutator $[1_{\al A},\dots,1_{\al A}]$.

$(1)\Rightarrow(2)$: This implication is a~consequence of \cite[Lemma 4.2]{Op:ARDO}.
First, observe that
\(
  \big(
  t^{\al A} ( \tup a_1, \dots, \tup a_n ),
    t^{\al A} ( \tup b_1, \tup a_2 \dots, \tup a_n ),
    \dots,
    t^{\al A} ( \tup b_1, \tup b_2 \dots, \tup b_n )
    \big)  \in \Delta_n.
\)
From the mentioned lemma, we get that the last element of this tuple is
$[1]_n$-related to the result of $q_n$ applied to all the previous elements. But
since $\al A$ is supernilpotent of degree $n$, and therefore $[1]_n = 0_{\al
A}$, this gives the desired identity.

$(2)\Rightarrow(3)$: For the $n$-ary polynomial term $t(x_1,\ldots,x_n)$, there
is a term $s$ in the language of ${\al A}$ and there are $c_1,\ldots, c_m \in A$
such that \[ t(x_1,\ldots,x_n) = s(x_1,\ldots,x_n,c_1, \ldots,c_m). \]
Now we apply (2) for the term $s$ and for
${\tup a}_1 := a_1$, ${\tup b}_1 := b_1$, \ldots,
${\tup a}_{n-1} := a_{n-1}$, ${\tup b}_{n-1} := b_{n-1}$,
${\tup a}_n := (a_n, c_1,\ldots, c_m)$,
${\tup b}_n := (b_n, c_1,\ldots, c_m)$.

$(3)\Rightarrow(1)$:
We will prove that the condition (3) implies that for any $n$-tuple of principal
congruences $\theta_1,\dots,\theta_n$, we have $[\theta_1,\dots,\theta_n] =
0_{\al A}$.
The claim then follows from join distributivity of the higher commutator. Suppose
that $\theta_i = \Cg{(a_i,b_i)}$ for all $i$.  We know that the relation
$\Delta(\theta_1,\dots,\theta_n)$ \cite[Lemma 3.3]{Op:ARDO} consists of tuples
of the form
\[
  \big(
    t( \tup a_1, \dots, \tup a_n ),
    t( \tup b_1, \tup a_2, \dots, \tup a_n ),
    \dots,
    t( \tup b_1, \tup b_2, \dots, \tup b_n )
  \big)
\]
where $\tup a_i \equiv_{\theta_i} \tup b_i$ and $t$ is a term operation of ${\al
A}$.  Since $\al A$ is a~Mal'cev algebra, and therefore any reflexive binary
compatible relation is a~congruence, and since $\delta_i$ is generated by
$(a_i,b_i)$, every pair $(c,d)\in \theta_i$ is of the form $(p^{\al A}(a_i),
p^{\al A}(b_i))$, where $p$ is a polynomial term of ${\al A}$.  Hence, $\tup
a_i$ and $\tup b_i$ are of the form $(t_{i1}(a_i),\dots,t_{im_i}(a_i))$ and
$(t_{i1}(b_i),\dots,t_{im_i}(b_i))$ for some unary polynomial operations
$t_{ij}$. By composing these polynomials with $t$, we obtain
\begin{multline*}
  \Delta(\theta_1,\dots,\theta_n) = 
  \{ \big(
    p( a_1, \dots, a_n ),
    p( b_1, a_2, \dots, a_n ),
    \dots,
    p( b_1, b_2, \dots, b_n )
    \big)
    \mid \\
    p \text{ is an an~$n$-ary polynomial operation of } {\al A} \}.
\end{multline*}
By combining this observation with $(3)$, we obtain that the last coordinate of
a~tuple in $\Delta(\theta_1,\dots,\theta_n)$ is determined by the other
coordinates, therefore by \cite[Theorem 1.2]{Op:ARDO}, we get that
$[\theta_1,\dots,\theta_n] = 0_{\al A}$, as required.
\end{proof}

It is a consequence of \cite[Lemma~2.7]{BM:SPD} (which builds upon Lemma~14.6 of
\cite{FM:CTFC}) that a finite supernilpotent algebra in a congruence permutable
variety has a finitely generated clone of term operations.
Theorem~\ref{thm:spn-identities} provides another way of establishing this fact.

\begin{corollary}[{cf. \cite[Lemma~2.7]{BM:SPD}}]  \label{cor:fg}
  Let $n \in \N$, and let $\ab{A}$ be a supernilpotent Mal'cev algebra
  of degree $n$. Then the clone of term operations is generated by
  the Mal'cev term operation together with all term operations of arity at most
  $n+1$.
\end{corollary}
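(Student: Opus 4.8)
The plan is to apply Theorem~\ref{thm:spn-identities}, specifically the equivalence $(1)\Leftrightarrow(2)$, to reduce an arbitrary term operation to an expression built from a bounded amount of data. Let $\ab A$ be supernilpotent Mal'cev of degree $n$; then $\ab A$ has a strong $(n+1)$-cube term $q_{n+1}$ obtained recursively from the Mal'cev term $q$ as described before Theorem~\ref{thm:spn-identities}, and $\ab A$ is supernilpotent of degree $n$, so condition \eqref{it:t2} of that theorem holds with ``$n$'' there replaced by $n+1$. Let $\var C$ be the subclone of $\Clo \ab A$ generated by $q$ together with all term operations of arity at most $n+1$; note $q_{n+1}\in\var C$. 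The goal is to show $\Clo \ab A = \var C$. I would prove by induction on $m$ that every $m$-ary term operation of $\ab A$ lies in $\var C$; the base cases $m \le n+1$ are immediate.

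For the induction step, take $m > n+1$ and an $m$-ary term operation $t^{\ab A}$. Split the variables into $n+1$ nonempty blocks $\tup x_1,\dots,\tup x_{n+1}$ of sizes $m_1,\dots,m_{n+1}$, each block of size at most $m-1$ (possible since $m > n+1 \ge 2$, so we can make each block strictly smaller than $m$). Fix, for each block $i$, two tuples $\tup a_i, \tup b_i \in A^{m_i}$; condition \eqref{it:t2} gives
\[
  t^{\ab A}(\tup b_1,\dots,\tup b_{n+1}) =
  q_{n+1}^{\ab A}\bigl(
    t^{\ab A}(\tup a_1,\dots,\tup a_{n+1}),
    t^{\ab A}(\tup b_1,\tup a_2,\dots,\tup a_{n+1}),
    \dots,
    t^{\ab A}(\tup a_1,\tup b_2,\dots,\tup b_{n+1})
  \bigr).
\]
The key observation is that each of the $2^{n+1}-1$ arguments of $q_{n+1}^{\ab A}$ on the right is a term operation $t^{\ab A}$ with some coordinates in each block frozen either to the constants $\tup a_i$ or to the constants $\tup b_i$ --- but none of these arguments has \emph{all} blocks set to $\tup b$. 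Thus, if I instead treat one distinguished block, say block $1$, as still carrying variables and freeze all of blocks $2,\dots,n+1$ as well as the other coordinates, I can run this as a genuine identity between term operations (not polynomial operations) by taking $\tup a_i,\tup b_i$ to be tuples of variables; the point is that a block of variables of size $m_i \le m-1$ produces arguments of arity at most $(n+1)(m-1)$ which is still $> m$, so a naive count does not immediately close the induction.

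Therefore the \textbf{main obstacle} --- and the step requiring care --- is to set up the identity so that each recursive call genuinely has smaller arity. The right way is: freeze \emph{all but one} of the $2(n+1)$ tuples $\tup a_i,\tup b_i$ to constants, letting only the free variables of $t^{\ab A}$ vary; then each argument $t^{\ab A}(\dots)$ of $q_{n+1}^{\ab A}$ depends on at most $\max_i m_i \le m-1$ of the original variables (the ones in the single unfrozen block), hence is (the composition with projections of) a term operation of arity $\le m-1$, which lies in $\var C$ by the induction hypothesis. Then $t^{\ab A}(\tup b_1,\dots,\tup b_{n+1})$, which as the free variables range over all of $A$ recovers all of $t^{\ab A}$ after suitably distributing the $m$ variables, is $q_{n+1}^{\ab A}$ applied to elements of $\var C$, hence in $\var C$. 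Since $q_{n+1}$ is itself a composition of copies of $q$, and $q \in \var C$, we conclude $t^{\ab A}\in\var C$, completing the induction and the proof. The one bookkeeping subtlety is to choose the block decomposition and the assignment of which variables are ``frozen as $\tup a$'' versus ``frozen as $\tup b$'' versus ``free'' so that letting the free variables vary over all tuples in $A^{m_1}$ (for the role of $\tup b_1$) while the remaining $m - m_1$ variables are fed in through the constant slots does reconstruct the full $m$-ary function $t^{\ab A}$; this is exactly the $(2)\Rightarrow(3)$ freezing trick in the proof of Theorem~\ref{thm:spn-identities}, applied one more time.
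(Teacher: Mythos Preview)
Your proposal has a genuine gap. The decisive step---``freeze \ldots\ to constants''---introduces elements of $A$ into the arguments of $q_{n+1}^{\ab A}$, so those arguments are \emph{polynomial} operations, not term operations. The clone $\var C$ you defined is a clone of term operations and is not assumed to contain the constant operations, so the induction hypothesis does not apply to these frozen pieces. (The paragraph following this corollary in the paper stresses exactly this point: the result is interesting precisely because it covers clones that are \emph{not} constantive.) Your proposal is also internally inconsistent: ``freeze all but one of the $2(n+1)$ tuples'' leaves only a single block free, so the left-hand side $t^{\ab A}(\tup b_1,\dots,\tup b_{n+1})$ is itself a polynomial of $m_j$ variables, not the full $m$-ary function you want to recover. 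The reference to the $(2)\Rightarrow(3)$ trick is the wrong direction: that trick passes from terms to polynomials, whereas here you need to stay inside terms.

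The fix, which is what the paper does, is to let the $\tup a_i$ and $\tup b_i$ be tuples of \emph{variables} rather than constants. Concretely, put $\tup b_1=x_1,\dots,\tup b_n=x_n$, $\tup b_{n+1}=(x_{n+1},\dots,x_k)$, and set every $\tup a_i$ to the constant-variable tuple $(x_{n+1},\dots,x_{n+1})$ of the appropriate length. Then Theorem~\ref{thm:spn-identities}\,\eqref{it:t2} yields an identity between \emph{term} operations; the left-hand side is the full $t^{\ab A}(x_1,\dots,x_k)$, and each argument of $q_{n+1}^{\ab A}$ on the right contains at least two occurrences of $x_{n+1}$ (one from some $\tup a_i$ and one from either $\tup b_{n+1}$ or another $\tup a_j$), hence has essential arity at most $k-1$, and the induction goes through.
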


\begin{proof}
  By induction on $k$, we show that every $k$-ary term operation of $\ab{A}$
  can be generated. We use \cite[Lemma~4.1]{Op:ARDO} to produce a strong
  cube term $q_{n+1}$ of arity $2^{n+1} - 1$ for $\ab{A}$.
  Let $k \ge n+2$, and let
  $f(x_1, x_2,\ldots, x_k)$ be a $k$-ary term operation.
  We set
  $\tup b_1 = x_{1}$, \ldots, $\tup b_{n}  = x_{n}$, 
  $\tup b_{n + 1} = (x_{n+1}, x_{n+2}, \ldots, x_{k})$,
  $\tup a_1 = \dots = \tup a_n = x_{n+1}$, and
  $\tup a_{n+1}$ to the $(k - n)$-tuple  $(x_{n+1},\dots,x_{n+1})$.
  Theorem~\ref{thm:spn-identities}\,\eqref{it:t2} implies
  \begin{multline*}
    q_{n+1}^{\ab{A}} (f (x_{n+1}, x_{n+1}, \dots, x_{n+1}, \dots, x_{n+1}), 
                  f (x_1, x_{n+1}, \dots,    x_{n+1}, \dots, x_{n+1}), 
                \dots, \\
                f (x_{n+1}, x_2, \dots,    x_{n+1}, \dots, x_k))
                  =
                f (x_1, \dots, x_k).
  \end{multline*}               
  Each of the $2^{n+1} - 1$ arguments of $q_{n+1}^{\ab{A}}$ contains at least
  two occurrences of $x_{n+1}$ and is therefore of essential arity at most $k-1$.
  By the induction hypothesis, each of these arguments describes a function
  that lies in the
  clone generated by the Mal'cev operation and the $(n+1)$-ary functions.
  Since $q_{n+1}$ is composed from the Mal'cev term, 
  $f$ can be generated by the Mal'cev term and functions of arity
  at most $n+1$.
\end{proof}

This generalizes \cite[Proposition~6.18]{AM:SAOH} to clones that do not
contain all constant operations. In contrast to the constantive case,
term functions of arity $n$ may not suffice: as an example
consider the clone $C$ on the set $M_{2\times 2}(\mathbb Z_2)$ of $2\times 2$
matrices over $\mathbb Z_2$ that contains all functions
$(X_1,\ldots, X_k) \mapsto \sum_{i = 1}^k A_iX_i$ with
$A_1, \ldots, A_k \in M_{2\times 2}(\mathbb Z_2)$ and $\sum_{i=1}^k A_i = 1$.
Then the algebra $\ab{A} = (M_{2\times 2}(\mathbb Z_2), C)$ is
$1$-supernilpotent, but $C$ is not generated by the identity mapping and the
unique Mal'cev operation in the clone. 

The condition~\eqref{it:t2} in Theorem~\ref{thm:spn-identities} also provides an
explicit, though infinite, set of identities that defines supernilpotency in
a~Mal'cev variety:

\begin{corollary}
  Let $\var V$ be a~variety with a~strong cube term $q_n$. Then the class of all
  supernilpotent algebras of degree $n$ forms a~subvariety of $\var V$. This
  subvariety is defined by the collection of identities of the form
  \[
    q_n\bigl( t( \tup x_1, \dots, \tup x_n ),
    t( \tup y_1, \tup x_2, \dots, \tup x_n ),
    \dots,
    t( \tup x_1, \tup y_2, \dots, \tup y_n ) \bigr) \equals
    t( \tup y_1, \tup y_2, \dots, \tup y_n ),
  \]
  where $t$ is a term of arity $k \geq n$ and  $k_1,\dots,k_n \in \N$ are such
  that $k_1 + \dots + k_n = k$.  Here, $\tup x_i$ and $\tup y_i$ denote the
  tuples of variables $(x_{i1},\dots,x_{ik_i})$ and $(y_{i1},\dots,y_{ik_i})$,
  respectively.
\end{corollary}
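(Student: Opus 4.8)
The plan is to deduce this corollary directly from Theorem~\ref{thm:spn-identities}, using the equivalence $(1)\Leftrightarrow(2)$ together with standard facts about varieties and identities. The statement has two parts: first, that supernilpotent algebras of degree $n$ form a subvariety of $\var V$; second, that the displayed collection of identities axiomatizes it.

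First I would observe that membership in the given class is, by Theorem~\ref{thm:spn-identities}\,\eqref{it:t2}, exactly the condition that $\al A$ satisfies all the identities in the displayed collection. Indeed, condition \eqref{it:t2} asserts that for every choice of arities $m_1,\dots,m_n$ (which I rename $k_1,\dots,k_n$ to match the corollary), every term $t$ of arity $k=k_1+\dots+k_n$, and \emph{all} tuples $\tup a_i,\tup b_i\in A^{k_i}$, the equation
\[
  q_n^{\al A}\bigl(t^{\al A}(\tup a_1,\dots,\tup a_n),t^{\al A}(\tup b_1,\tup a_2,\dots,\tup a_n),\dots,t^{\al A}(\tup a_1,\tup b_2,\dots,\tup b_n)\bigr)=t^{\al A}(\tup b_1,\tup b_2,\dots,\tup b_n)
\]
holds; since the $\tup a_i$ and $\tup b_i$ range over all tuples of the appropriate length, this is precisely the assertion that $\al A$ satisfies the identity obtained by replacing $\tup a_i$ with the variable-tuple $\tup x_i$ and $\tup b_i$ with $\tup y_i$. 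One subtlety: the corollary restricts to $k\ge n$ and $k_i\in\N$ (so each $k_i\ge 1$), whereas Theorem~\ref{thm:spn-identities}\,\eqref{it:t2} also allows this; the case $k_i=0$ for some $i$ is vacuous or reduces to a smaller instance, so no identities are lost. I would note this briefly.

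Next, since $\al A\in\var V$ means $\al A$ has the strong $n$-cube term $q_n$, Theorem~\ref{thm:spn-identities} applies to every $\al A\in\var V$, and gives that $\al A$ is supernilpotent of degree $n-1$ if and only if $\al A$ satisfies condition \eqref{it:t2}. Wait --- the corollary speaks of degree $n$, not $n-1$; here I would simply apply Theorem~\ref{thm:spn-identities} with the parameter shifted, i.e. use a strong $(n{+}1)$-cube term $q_{n+1}$ and the corresponding condition, so that ``supernilpotent of degree $n$'' corresponds to the identities with $q_{n+1}$ and $t$ of arity $k\ge n+1$. I should make sure the indexing in the final LaTeX matches whichever convention makes the statement literally correct; the cleanest route is to cite Theorem~\ref{thm:spn-identities} with $n$ replaced by $n+1$. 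Granting that, the class of supernilpotent algebras of degree $n$ inside $\var V$ is exactly $\{\al A\in\var V : \al A\models E\}$ where $E$ is the displayed set of identities. A subclass of a variety defined by additionally imposing a set of identities is again a variety (it is closed under $\HSP$, being an intersection of $\var V$ with an equationally defined class), hence a subvariety of $\var V$, and $E$ axiomatizes it relative to $\var V$. That is all that is claimed.

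I do not anticipate a genuine obstacle here --- the corollary is essentially a restatement of Theorem~\ref{thm:spn-identities}\,\eqref{it:t2} in the language of identities. The only points requiring care are bookkeeping: matching the degree convention ($n$ versus $n-1$, and whether to invoke $q_n$ or $q_{n+1}$), confirming that allowing $k_i=0$ adds nothing, and spelling out the (routine) passage from ``the algebra satisfies this equation for all substitutions of elements'' to ``the algebra satisfies this identity.'' I would keep the written proof to a few lines: invoke Theorem~\ref{thm:spn-identities} (appropriately indexed) to identify supernilpotency of degree $n$ with satisfaction of $E$, then note that imposing extra identities on a variety yields a subvariety.
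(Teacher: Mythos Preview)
Your approach is correct and matches the paper's: the paper gives no explicit proof of this corollary, treating it as an immediate restatement of Theorem~\ref{thm:spn-identities}\,(1)$\Leftrightarrow$(2) in the language of identities, together with the trivial observation that imposing additional identities on a variety yields a subvariety. Your remark about the indexing is well taken --- comparing with Theorem~\ref{thm:spn-identities}, the corollary's ``degree $n$'' should read ``degree $n-1$'' (or equivalently $q_n$ should be $q_{n+1}$ throughout); this appears to be an off-by-one slip in the paper's statement rather than anything substantive.
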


We will now use these identities to express terms of higher arity using the
strong cube term $q_k$ and terms of smaller arity. This method allows us to
prove that there is a~logarithmic bound on the sequence $\He{\al A}n$ for every
supernilpotent finite algebra with a~Mal'cev term, and as a consequence, we
obtain a~polynomial bound on $\Le{\al A}n$.

\begin{theorem} \label{thm:spn-height}
  Let $\al A$ be a~finite supernilpotent Mal'cev algebra.  Then there exist
  positive real numbers $c_1, c_2$ such that for all $n \in \N$,
  \(
    \He{\al A}{n} \leq c_1 \log n + c_2.
  \)
\end{theorem}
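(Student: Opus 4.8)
The plan is to prove the bound $\He{\al A}{n} \le c_1 \log n + c_2$ by a recursive argument: given an $n$-ary term $t$ with $n$ large, rewrite $t^{\al A}$ using the strong cube term $q_k$ (where $k-1$ is the supernilpotence degree) so that it becomes $q_k$ applied to a collection of term functions each of which depends essentially on significantly fewer than $n$ variables; then recurse, and control the height growth via Lemma~\ref{lem:heightfromthemiddle} together with Lemma~\ref{lem:recursion}. This mirrors the strategy behind Corollary~\ref{cor:fg}, but now we must be careful about arities shrinking by a constant \emph{fraction} rather than just by $1$, since only a fractional shrinkage yields a logarithmic (hence the $\log n$) rather than linear bound.

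First I would fix the supernilpotence degree, say $\al A$ is supernilpotent of degree $d$, set $n_0 := d+1$, and produce the strong cube term $q := q_{d+1}$ of arity $2^{d+1}-1$ via \cite[Lemma~4.1]{Op:ARDO} (noting $\al A$ has a Mal'cev term since it is a Mal'cev algebra). The key combinatorial step: given an $n$-ary term $t$ with $n > n_0$, partition the variables $x_1,\dots,x_n$ into $d+1$ consecutive blocks, each of size at most $\lceil n/(d+1) \rceil$; call the block tuples $\tup y_1,\dots,\tup y_{d+1}$ and let $\tup x_i$ be the constant tuple of the appropriate length all of whose entries are a fixed variable, say $x_1$ (or any variable appearing in $t$). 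Applying Theorem~\ref{thm:spn-identities}\,\eqref{it:t2} to $t^{\al A}$ with this block decomposition expresses $t^{\al A}(x_1,\dots,x_n)$ as $q^{\al A}$ applied to $2^{d+1}-1$ term functions, each obtained from $t^{\al A}$ by replacing all variables in at least one block by $x_1$; hence each argument has essential arity at most $n - \lfloor n/(d+1) \rfloor + 1 \le cn$ for a suitable constant $c < 1$ (once $n$ is large enough, e.g. $n \ge n_0$ chosen so that $c n_0 \ge 1$), and so can be represented by a term in $\le cn$ variables. By Lemma~\ref{lem:heightfromthemiddle}, $\he{t^{\al A}} \le \he{q} + \max$ over the arguments of their heights, which gives $f(n) \le f(\lfloor cn \rfloor) + d'$ where $f(n) := \He{\al A}{n}$ and $d' := \he{q}$, a constant. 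Then Lemma~\ref{lem:recursion} immediately yields $f(n) \le d' \log_{1/c} n + f(n_0)$, which is of the required form with $c_1 := d'/\log(1/c)$ and $c_2 := \He{\al A}{n_0}$.

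The main obstacle is the bookkeeping around essential arity versus formal arity: when I replace a whole block of variables by the single variable $x_1$, the resulting function $f(x_1, \dots, x_1, x_{i_1}, \dots, x_{i_r})$ is a function of $A^n \to A$ but its height as measured by $\He{\al A}{\cdot}$ is governed by the number of \emph{essential} arguments, so I need to observe that if $g^{\al A}$ is an $n$-ary term function depending essentially on at most $r$ of its variables, then $\htop_{\al A}(g^{\al A}) \le \He{\al A}{r}$ — this follows because $g^{\al A}$ is obtained from an $r$-ary term function by identifying/permuting variables, which does not increase the minimal height (substituting variables for variables leaves height unchanged). A secondary point requiring care: the monotonicity hypothesis of Lemma~\ref{lem:recursion} demands $f$ non-decreasing, which holds for $\He{\al A}{n}$ since any $n$-ary term is also (the substitution instance of) an $(n+1)$-ary term of the same height. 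Once these two routine observations are in place, the recursion closes and the theorem follows; I expect the write-up of the block decomposition and the Theorem~\ref{thm:spn-identities}\,\eqref{it:t2} application to be the only genuinely technical part.
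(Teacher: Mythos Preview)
Your proposal is correct and follows essentially the same approach as the paper: partition the variables into $d+1$ nearly equal blocks, apply Theorem~\ref{thm:spn-identities}\,\eqref{it:t2} to rewrite $t^{\al A}$ as $q_{d+1}$ applied to functions each missing at least one block, obtain the recursion $\He{\al A}{n} \le \he{q_{d+1}} + \He{\al A}{\lfloor cn \rfloor}$, and finish with Lemma~\ref{lem:recursion}. The only cosmetic difference is that the paper introduces a fresh variable $y$ for the constant tuples rather than reusing $x_1$, and your explicit remarks on essential versus formal arity and on monotonicity of $\He{\al A}{\cdot}$ are precisely the routine points the paper handles tersely.
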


\begin{proof}
Let $k$ be $1$ plus the degree of supernilpotency of $\al A$, and let $q_k$ be
a~strong cube term of ${\al A}$. Further assume that $\he{q_k} = d$.  We will
prove that there is a~constant $c<1$ such that for any large enough $n$, we have
\(
  \He{\al A}{n} \leq d + \He{\al A}{\lfloor cn\rfloor}.
\)
To do that, we start with a~term $f$ in the language of $\al A$ of high-enough
arity $n$, and we will group its variables into $k$ pieces of almost the same
length, and then use the identity from item~\eqref{it:t2} in
Theorem~\ref{thm:spn-identities} to replace $f$ by a composition of the strong
cube term $q_k$ with terms of arity lower than $n$.
More precisely, let $n = qk + r$ where $r < k$, $q > 1$. We group the variables
of $f$ into $r$ many $(q+1)$-tuples $\tup x_1$, \dots, $\tup x_r$ and $(k - r)$
many $q$-tuples $\tup x_{r+1}$, \dots, $\tup x_k$ so that $\tup x_1 =
(x_1,\dots,x_{i_1})$, $\tup x_2 = (x_{i_1+1},\dots,x_{i_2})$, etc.  We take
a~new variable $y$, and for $i \in \{1,\dots,k\}$, we let $\tup y_i$ denote the
tuple $(y,\dots,y)$ of the same length as $\tup x_i$ (i.e., $\tup y_i$ is a
$(q+1)$-tuple for $i \leq r$ and a $q$-tuple for $i > r$).  Now applying the
condition~\eqref{it:t2} of Theorem~\ref{thm:spn-identities}, we get that
\begin{equation} \label{eq:qf}
  f(\tup x_1,\dots,\tup x_k) \equals
  q_k( f(\tup y_1,\tup x_2,\dots,\tup x_k), \dots,
      f(\tup x_1,\tup y_2, \dots, \tup y_k))
\end{equation}
is satisfied in $\al A$. The right hand side is an application of $q_k$ on terms
obtained from $f$ by substituting one or more of $\tup x_i$'s by $\tup y_i$. The
maximal arity of these $2^k - 1$ terms is obtained, e.g., when only $\tup x_k$
is substituted by $\tup y_k$.
In this case, omitting $\tup x_k$ reduces the arity of $f$ by $q = \lfloor
\frac{n}{k} \rfloor$ and adds $1$ for the new variable $y$.
Hence, each of the $2^k - 1$ arguments of $q_k$ in~\eqref{eq:qf} contains at most
$n + 1 - \lfloor \frac nk \rfloor$ many different variables.
For each of these $2^k - 1$ arguments, we pick a~term $u_i$ of
height at most $\He{\al A}{n + 1 - \lfloor \frac nk \rfloor}$ representing
the same function on ${\al A}$. From Lemma~\ref{lem:heightfromthemiddle}, we
obtain that $q_k (u_1, \ldots, u_{2^{k} - 1})$ is a~term of height at most
$d + \Ht_{\al A} ( n + 1 - \lfloor \frac nk \rfloor )$
that induces the same function on ${\al A}$ as $f$. Therefore, 
\[
  \Ht_{\al A} (n) \leq d + \Ht_{\al A} ( n + 1 - \lfloor \frac nk \rfloor )
\]
for every $n \in \mathbb N$. We choose $\epsilon \in \mathbb{R}$ such that $0 <
\epsilon < 1/k$, we set $c = 1 - 1/k + \epsilon$, and let $n_0 > k$ be big
enough so that $\epsilon n_0 \geq 2$ and $c n_0 \ge 1$.  Then for any $n \geq
n_0$, we have
\[
  n + 1 - \lfloor \frac nk \rfloor < n  - \frac nk + 2
    \leq (1 - \frac 1k + \epsilon)n = c n.
\]
Therefore, $\Ht_{\al A} (n) \leq d + \Ht_{\al A} ( \lfloor cn \rfloor )$ for any
$n \geq n_0$. From Lemma~\ref{lem:recursion}, we obtain that $\Ht_{\al A} (n)
\le d\log_{1/c} n + \Ht_{\al A} (n_0)$ for all $n \in \N$.  Choosing $c_1 : = d
/ \log (1/c)$ and $c_2 = \He{\al A}{n_0}$ we obtain the required result.
\end{proof}

\begin{corollary} \label{cor:lsm}
  Let $\al A$ be a~finite supernilpotent Mal'cev algebra, then
  there exist an~integer $k>0$ and a positive real $c$ such that
  for all $n \in \N$,
  \(
    \Le{\al A}{n} \leq c n^k.
  \)
\end{corollary}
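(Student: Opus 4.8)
The plan is to deduce Corollary~\ref{cor:lsm} immediately from Theorem~\ref{thm:spn-height} together with the length--height comparison in Lemma~\ref{lem:general-bound}\,\eqref{it:4}; no new ideas are needed, only a short computation.

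First I would fix $m$, the maximal arity of the basic operations of $\al A$; since $\al A$ is of finite type, this is a well-defined element of $\N_0$. By Lemma~\ref{lem:general-bound}\,\eqref{it:4}, we have $\Le{\al A}{n} \le (m+1)^{\He{\al A}{n}}$ for every $n \in \N$. Next, Theorem~\ref{thm:spn-height} applies (as $\al A$ is a~finite supernilpotent Mal'cev algebra) and provides positive real numbers $c_1, c_2$ with $\He{\al A}{n} \le c_1 \log n + c_2$ for all $n$. Since the map $x \mapsto (m+1)^x$ is non-decreasing, combining the two bounds gives
\[
  \Le{\al A}{n} \le (m+1)^{c_1 \log n + c_2}
    = (m+1)^{c_2}\, n^{c_1 \log(m+1)},
\]
where the last equality uses $(m+1)^{c_1 \log n} = e^{c_1 \log(m+1)\,\log n} = n^{c_1 \log(m+1)}$.

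Finally I would set $c := (m+1)^{c_2}$ and $k := \max\{1, \lceil c_1 \log(m+1)\rceil\}$, which is an~integer with $k \ge 1$. Since $n \ge 1$ and $0 \le c_1 \log(m+1) \le k$, we get $n^{c_1\log(m+1)} \le n^k$, hence $\Le{\al A}{n} \le c\, n^k$ for all $n \in \N$, as required.

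I do not expect any genuine obstacle: the statement is a~routine consequence of Theorem~\ref{thm:spn-height} and Lemma~\ref{lem:general-bound}. The only point requiring a~moment of care is turning the bound $(m+1)^{c_1 \log n}$ into an~honest power $n^k$ with a~\emph{positive integer} exponent, which is handled by rounding $c_1\log(m+1)$ up and taking the maximum with $1$ to cover the degenerate case $m = 0$.
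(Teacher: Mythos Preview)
Your computation from Theorem~\ref{thm:spn-height} and Lemma~\ref{lem:general-bound}\,\eqref{it:4} is fine, but there is a genuine gap: you write ``since $\al A$ is of finite type'', yet the hypothesis of the corollary does \emph{not} assume this. A finite supernilpotent Mal'cev algebra may well carry infinitely many fundamental operations (e.g.\ expand a finite abelian group by all its term operations), in which case the maximal arity $m$ that you feed into Lemma~\ref{lem:general-bound}\,\eqref{it:4} is undefined, and that lemma---stated only for algebras of finite type---does not apply.

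This is precisely the point the paper's proof addresses. It first invokes Corollary~\ref{cor:fg} to see that the clone of term operations of $\al A$ is finitely generated, hence there is a \emph{finite} subset of the fundamental operations generating all the others; one then passes to the finite-type reduct $\al A'$ with just these operations. Now your argument (Theorem~\ref{thm:spn-height} plus Lemma~\ref{lem:general-bound}\,\eqref{it:4}) applies verbatim to $\al A'$, giving $\Le{\al A'}{n}\le c\,n^k$. Since every term in the language of $\al A'$ is also a term in the language of $\al A$ and the two algebras have the same term functions, one has $\Le{\al A}{n}\le \Le{\al A'}{n}$, and the result follows. So your proof becomes correct once you insert this reduction step; without it, the appeal to Lemma~\ref{lem:general-bound} is unjustified. (As a side remark, the case $m=0$ you worry about cannot occur: a Mal'cev algebra with at least two elements has a fundamental operation of arity $\ge 1$.)
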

\begin{proof}
  The algebra $\al A$ need not be of finite type. However, by
  Corollary~\ref{cor:fg}, its clone of term operations is finitely generated.
  Therefore, we can choose a finite subset of the fundamental operations of $\A$
  that generates all other fundamental operations, and we let $\A'$ be the
  reduct of $\A$ with only these finitely many fundamental operations; let $m$
  be their maximal arity.  By Theorem~\ref{thm:spn-height}, we have $\He{\al
  A'}{n} \le c_1 \log (n) + c_2$. Now Lemma~\ref{lem:general-bound} yields
  $\Le{\al A'}{n} \le (m+1)^{\He{\al A'}{n}}$, and thus there is a positive real
  $c$ and $k \in \N$ such that for all $n \in \N$, $\Le{\al A'}{n} \le c n^k$.
  We clearly have $\Le{\al A}{n} \le \Le{\al A'}{n}$ for all $n \in \N$ which
  implies the result.
\end{proof}

\begin{proof}[Proof of Theorem~\ref{thm:supernilpotent}.]
  Since the variety generated by $\al A$ is locally finite and has a~weak
  difference term, using \cite[Theorem~4.8]{Wi:OSA} and its proof, we get that
  $\ab{A}$ has a Mal'cev term.  Corollary~\ref{cor:lsm} now yields the second
  inequality.  For the first equality, we show that for every $k \in \N_0$,
  there is an $(2k+1)$-ary term $t_k$ such that $t^{\ab{A}}$ depends on all of
  its arguments. To this end, let $m$ be a Mal'cev term, let $t_0 (x_1) = x_1$,
  and $t_{k} (x_1,\ldots, x_{2k+1}) = m(t_{k-1} (x_1,\ldots, x_{2k-1}), x_{2k},
  x_{2k + 1})$ for $k \in \N$.  Let $a,b$ be different elements of $A$.  We
  consider $y := t_k^{\ab{A}}(a,\ldots, a, b, \ldots, b)$ where the first $2 k +
  1 - l$ arguments are $a$ and the remaining $l$ arguments are set to $b$. Then
  $y = a$ if $l$ is even, and $y =b$ otherwise. This proves that $t_k$ depends
  on  all of its $2k+1$ arguments.  We will now show $\Le{\al A}{n} \ge n - 1$:
  if $n$ is even, then $t_{(n-2)/2}^{\ab{A}} (x_1,\ldots, x_{n-1})$ depends on
  $n-1$ arguments. Hence every term representing $t_{(n-2)/2}^{\ab{A}}$ must
  contain at least $n-1$ variables, and is thus of length at least $n-1$.  If
  $n$ is odd, then $t_{(n-1)/2}^{\ab{A}} (x_1,\ldots,x_n)$ depends on all of its
  $n$ arguments, and thus $\lenop_{\ab{A}} (t_{(n-1)/2}^{\ab{A}}) \ge n$.
\end{proof}

In the rest of this section, we give an argument that out of finite algebras in
congruence modular varieties, only supernilpotent ones have a~polynomial bound
on the length of term functions. This argument is based on a~description of the
sequence $\Fs{\al A}n$. A~rough asymptotic behavior of this sequence for
congruence modular algebras have been first described by Kearnes in
\cite{Ke:CMVW}. He proved that an~algebra of finite type in a~congruence modular
variety has a~doubly exponential lower bound if and only if it is not a~product
of prime-power order nilpotent algebras which is now known to be equivalent to
being supernilpotent \cite{AM:SAOH}. We present a~refinement of this result
which is given by a~combination of several different sources.

\begin{proposition} \label{pro:comb}
   Let $\ab{A}$ be a finite algebra in a congruence modular variety, and let $k
   \in \N$.  Then $\ab{A}$ is $k$-supernilpotent if and only if there is a
   polynomial $p$ of degree $k$ such that for all $n \in \N$,
   $\Fs{\al A}n \le 2^{p(n)}$.
\end{proposition}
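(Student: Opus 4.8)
The plan is to prove the two directions of Proposition~\ref{pro:comb} separately, drawing on the structure theory of congruence modular varieties.

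\textbf{The forward direction.} Suppose $\ab A$ is $k$-supernilpotent. Since $\ab A$ is finite in a congruence modular variety, I would first invoke the standard fact (as in the proof of Theorem~\ref{thm:supernilpotent}, via \cite{Wi:OSA}) that $\ab A$ has a Mal'cev term; thus $\ab A$ is a finite supernilpotent Mal'cev algebra and Corollary~\ref{cor:fg} applies, so its clone of term operations is finitely generated. Passing to a finite-type reduct $\ab A'$ that is term equivalent to $\ab A$ (so $\Fs{\ab A'}n = \Fs{\ab A}n$), I would apply Lemma~\ref{lem:general-bound}\,\eqref{it:1a} together with the polynomial bound $\Le{\ab A'}n \le cn^{k'}$ from Corollary~\ref{cor:lsm}, giving $\Fs{\ab A}n \le 2^{c' \log(n) n^{k'}}$. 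This is a bound of the right shape but with the wrong degree; to get degree exactly $k$ I would instead cite Kearnes' or Berman--McKenzie-style counting (or the sharper references the paragraph alludes to): a $k$-supernilpotent algebra of prime power order is, by the decomposition into a direct product of nilpotent algebras of prime power order plus the Hamiltonian/coordinate-wise structure, such that the number of $n$-ary term operations is at most the number of polynomials of degree $\le k$ in $n$ variables over the relevant finite rings, which is $2^{O(n^k)}$. So the key input here is an exact-degree counting result for supernilpotent algebras; I would assemble it from \cite{AM:SAOH,Ke:CMVW}.

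\textbf{The reverse direction.} Suppose $\Fs{\ab A}n \le 2^{p(n)}$ for a polynomial $p$ of degree $k$. I must show $\ab A$ is $k$-supernilpotent. By Kearnes' theorem (\cite{Ke:CMVW}, as quoted in the paragraph preceding the proposition), a finite algebra of finite type in a congruence modular variety has a doubly exponential free spectrum unless it is a product of nilpotent algebras of prime power order; since our bound $2^{p(n)}$ is sub-doubly-exponential, $\ab A$ is such a product, hence supernilpotent of \emph{some} degree $d$. It remains to show $d \le k$. For this I would use the converse counting estimate: a $d$-supernilpotent algebra with a nontrivial factor has free spectrum bounded \emph{below} by $2^{c n^d}$ for some $c > 0$ and all large $n$ — essentially because one can realize all $\binom{n}{\le d}$-many "monomials of degree $\le d$" as distinct term operations. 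Comparing $2^{cn^d} \le \Fs{\ab A}n \le 2^{p(n)}$ with $\deg p = k$ forces $d \le k$. Again the crucial lemma is this lower bound on $\Fs{\ab A}n$ in terms of the supernilpotency degree, which I would extract from \cite{AM:SAOH} (where the constantive case is treated) or from \cite{Ke:CMVW}.

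\textbf{Main obstacle.} The heart of the matter — in both directions — is pinning down the \emph{exact} exponent $k$ in $2^{\Theta(n^k)}$ as a function of the supernilpotency degree, rather than merely an exponent of the right order of magnitude; the asymptotics in Theorem~\ref{thm:primal}, Corollary~\ref{cor:lsm}, and Lemma~\ref{lem:general-bound} only give polynomial-in-polynomial control, which is enough for resilience but not for a clean degree-$k$ statement. I expect the cleanest route is to reduce to prime-power nilpotent algebras, represent term operations by generalized polynomials over finite rings/modules (using the commutator-theoretic normal form available in congruence modular nilpotent algebras), and count monomials of bounded multidegree; the degree bound on the multidegree is precisely $k$ when the algebra is $k$-supernilpotent. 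The bookkeeping to make the upper and lower monomial counts match the degree of $p$ is where the real work lies, and it is why the authors phrase this as a "combination of several different sources."
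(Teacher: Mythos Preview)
Your outline is in the right neighborhood, but it misses the single tool that makes both directions clean: the paper goes through the characterization ``$\ab A$ is $k$-supernilpotent $\Leftrightarrow$ every commutator term (polynomial) of $\ab A$ has rank at most $k$'' from \cite[Lemma~7.5]{AM:SAOH}, together with Berman and Blok's result \cite{BB:FSON} that for a finite nilpotent algebra in a congruence uniform variety the free spectrum is \emph{exactly} $2^{p(n)}$ with $\deg p$ equal to the maximal rank of a commutator term. These two results convert ``degree of supernilpotency'' into ``degree of the polynomial in the exponent'' on the nose, so the monomial-counting program you sketch in your final paragraph is already packaged in \cite{BB:FSON}; you do not need to redo it, nor do you need separate upper and lower estimates.

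Concretely, for the forward direction the paper simply cites \cite{Wi:OSA} for a Mal'cev term, then \cite[Lemma~7.5]{AM:SAOH} to bound commutator-term ranks by $k$, then \cite{BB:FSON} for the count; your detour through Corollary~\ref{cor:lsm} and Lemma~\ref{lem:general-bound} is, as you noticed, too lossy. For the reverse direction the paper does \emph{not} use Kearnes' dichotomy. Instead it applies tame congruence theory \cite[Theorem~9.18, Lemma~12.4]{HM:TSOF} plus \cite{FM:CTFC} to deduce directly that $\ab A$ is nilpotent with a Mal'cev term, passes to the constant expansion $\ab A^*$ (observing $\Fs{\ab A^*}{n} \le \Fs{\ab A}{n+|A|} \le 2^{p(n+|A|)}$, still a degree-$k$ bound), and then feeds this bound into the proof of \cite[Theorem~1]{BB:FSON} to conclude that all commutator terms of $\ab A^*$ have rank at most $k$, whence $k$-supernilpotency via \cite[Lemma~7.5]{AM:SAOH}. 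Your two-step plan (Kearnes for ``supernilpotent of some degree $d$'', then a lower bound $\Fs{\ab A}{n} \ge 2^{cn^d}$ to force $d\le k$) would also succeed, but note that Kearnes' result as quoted is stated for algebras of finite type, which the proposition does not assume; the paper's Hobby--McKenzie route avoids that wrinkle, and avoids needing any lower bound on the free spectrum at all.
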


\begin{proof}
  For the ``if''-part, first observe that Theorem~9.18 of \cite{HM:TSOF} implies
  that the variety ${\mathcal V}(\ab{A})$ omits types $\boldsymbol{1}$ and
  $\boldsymbol{5}$.  From \cite[Lemma~12.4]{HM:TSOF}, we obtain that $\ab{A}$ is
  right nilpotent, and since the commutator operation in a congruence modular
  variety is commutative, $\ab{A}$ is therefore nilpotent. Now
  \cite[Theorem~6.2]{FM:CTFC} yields that $\ab{A}$ has a Mal'cev term.  Let
  $\ab{A}^*$ be the expansion of $\ab{A}$ with all its constants.  Then
  $\ab{A}^*$ is nilpotent and generates a congruence permutable variety.  The
  variety ${\mathcal V}(\ab{A}^*)$ is nilpotent by \cite[Theorem~14.2]{FM:CTFC},
  and hence congruence uniform by \cite[Corollary~7.5]{FM:CTFC}.  Since for all
  $n \in \N$,  $\F{\ab{A}^*}{n} \le \F{\ab{A}}{n + |A|} \le 2^{p (n + |A|)}$, we
  obtain from the proof of \cite[Theorem~1]{BB:FSON} that all commutator terms
  (in the sense of \cite[p.  179]{Ke:CMVW}) of $\ab{A}^*$ are of rank at most
  $k$. Hence all commutator polynomials (in the sense of
  \cite[Definition~7.2]{AM:SAOH}) of $\ab{A}$ are of rank at most $k$, and then
  \cite[Lemma~7.5]{AM:SAOH} yields that $\ab{A}$ is $k$-supernilpotent.

  For the ``only if''-part, we assume that $\ab{A}$ is $k$-supernilpotent. Then
  from the proof of \cite[Theorem 4.8]{Wi:OSA}, it follows that $\ab{A}$ has a
  Mal'cev term, and thus by Lemma~7.5 of \cite{AM:SAOH} each commutator term of
  $\ab{A}$ is of rank at most $k$. Now from the proof of Theorem~1 in
  \cite{BB:FSON}, we obtain a polynomial $p$ of degree at most $k$ such that for
  all $n \in \N$, $\F{\ab{A}}{n}$ has exactly $2^{p(n)}$ elements.
\end{proof}

Section~4 of \cite{Ai:OTDD} contains a self-contained version of
Proposition~\ref{pro:comb} for the case that $\ab{A}$ is an expanded group.

\begin{corollary}
  Let $\al A$ be a~finite algebra of finite type in a~congruence modular
  variety. Then the following are equivalent.
  \begin{enumerate}
    \item $\al A$ is supernilpotent;
    \item there exists constants $c_1$, $c_2 > 0$ such that $\He{\al A}n \leq c_1
    \log n + c_2$ for all $n > 0$;
    \item there exists a~polynomial $p_1$ such that $\Le{\al A}n \leq p_1(n)$
    for all $n>0$;
    \item there exists a~polynomial $p_2$ such that $\Fs{\al A}n \leq
    2^{p_2(n)}$ for all $n>0$.
  \end{enumerate}
\end{corollary}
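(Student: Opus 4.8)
The plan is to establish the cycle of implications $(1)\Rightarrow(2)\Rightarrow(3)\Rightarrow(4)\Rightarrow(1)$, each step quoting a result already available.

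For $(1)\Rightarrow(2)$ I would argue as in the opening of the proof of Theorem~\ref{thm:supernilpotent}: the variety generated by $\al A$ is locally finite and, being congruence modular, has a weak difference term, so \cite[Theorem~4.8]{Wi:OSA} and its proof show that $\al A$ has a Mal'cev term. Thus $\al A$ is a finite supernilpotent Mal'cev algebra, and Theorem~\ref{thm:spn-height} yields constants $c_1,c_2>0$ with $\He{\al A}{n}\le c_1\log n+c_2$ for all $n$. For $(2)\Rightarrow(3)$, let $m$ be the maximal arity of the finitely many basic operations of $\al A$; by Lemma~\ref{lem:general-bound}\,\eqref{it:4} we get $\Le{\al A}{n}\le (m+1)^{\He{\al A}{n}}\le (m+1)^{c_1\log n+c_2}=(m+1)^{c_2}\,n^{\,c_1\log(m+1)}$, which is bounded above by a polynomial $p_1$ in $n$ (after possibly enlarging $p_1$ to cover small values of $n$).

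For $(3)\Rightarrow(4)$, Lemma~\ref{lem:general-bound}\,\eqref{it:1a} provides $c>0$ with $\Fs{\al A}{n}\le 2^{c\log(n)\,\Le{\al A}{n}}$ for all $n\ge 2$; substituting $\Le{\al A}{n}\le p_1(n)$ shows the exponent is at most a polynomial $p_2(n)$ for $n\ge 2$, and increasing $p_2$ so that $p_2(1)\ge \log_2\Fs{\al A}{1}$ gives $\Fs{\al A}{n}\le 2^{p_2(n)}$ for all $n>0$; we may also arrange $\deg p_2\ge 1$. Finally, for $(4)\Rightarrow(1)$ set $k:=\deg p_2$; then $p_2$ is a polynomial of degree $k$ with $\Fs{\al A}{n}\le 2^{p_2(n)}$, so the ``if''-direction of Proposition~\ref{pro:comb} shows that $\al A$ is $k$-supernilpotent, hence supernilpotent.

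All the genuine content is imported: Proposition~\ref{pro:comb} (the free-spectrum characterization of supernilpotency, used in $(4)\Rightarrow(1)$) and Theorem~\ref{thm:spn-height} together with the Mal'cev-term existence (used in $(1)\Rightarrow(2)$) carry the weight, while the middle implications are just the elementary estimates of Lemma~\ref{lem:general-bound}. Accordingly I do not expect any real obstacle; the only points needing a little care are the routine bookkeeping with polynomial degrees and small-$n$ corrections, and remembering that supernilpotency in a congruence modular variety forces a Mal'cev term so that Theorem~\ref{thm:spn-height} becomes applicable.
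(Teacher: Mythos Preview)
Your proposal is correct and follows essentially the same route as the paper: the cycle $(1)\Rightarrow(2)\Rightarrow(3)\Rightarrow(4)\Rightarrow(1)$ via Theorem~\ref{thm:spn-height} (after obtaining a Mal'cev term), Lemma~\ref{lem:general-bound}, and Proposition~\ref{pro:comb}. You simply spell out the polynomial bookkeeping in more detail than the paper does, and your care in ensuring $\deg p_2\ge 1$ so that Proposition~\ref{pro:comb} applies with $k\in\N$ is a nice touch.
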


\begin{proof}
  $(1)\Rightarrow(2)$: as noted in the proof of Proposition~\ref{pro:comb}, a~finite
  supernilpotent algebra has a~Mal'cev term, therefore
  Theorem~\ref{thm:spn-height} applies in this case.

  The implications $(2)\Rightarrow(3)$ and $(3)\Rightarrow(4)$ are given by
  Lemma~\ref{lem:general-bound}, and $(4)\Rightarrow(1)$ is implied by
  Proposition~\ref{pro:comb}.
\end{proof}

\subsection*{Acknowledgements}
The authors would like to thank \'{A}gnes Szendrei for supplying the argument
leading to Lemma~\ref{lem:general-bound}\,\eqref{it:2}.

\def\cprime{$'$}


\begin{thebibliography}{{Moo}16}

\bibitem[AE06]{AE:EKCN}
Erhard Aichinger and J\"urgen Ecker.
\newblock Every {$(k+1)$}-affine complete nilpotent group of class {$k$} is
  affine complete.
\newblock {\em Internat. J. Algebra Comput.}, 16(2):259--274, 2006.

\bibitem[Aic14]{Ai:OTDD}
Erhard Aichinger.
\newblock On the direct decomposition of nilpotent expanded groups.
\newblock {\em Comm. Algebra}, 42(6):2651--2662, 2014.

\bibitem[AM10]{AM:SAOH}
Erhard Aichinger and Neboj\v{s}a Mudrinski.
\newblock Some applications of higher commutators in {M}al'cev algebras.
\newblock {\em Algebra Universalis}, 63(4):367--403, 2010.

\bibitem[BB87]{BB:FSON}
Joel Berman and W.~J. Blok.
\newblock Free spectra of nilpotent varieties.
\newblock {\em Algebra Universalis}, 24(3):279--282, 1987.

\bibitem[BM14]{BM:SPD}
Wolfram~Bentz and Peter~Mayr.
\newblock Supernilpotence prevents dualizability.
\newblock {\em J. Aust. Math. Soc.}, 96(1):1--24, 2014.

\bibitem[BS81]{BS:ACIU}
Stanley Burris and Hanamantagouda~P. Sankappanavar.
\newblock {\em A course in universal algebra}, volume~78 of {\em Graduate Texts
  in Mathematics}.
\newblock Springer-Verlag, New York, 1981.

\bibitem[Bul01]{Bu:OTNO}
Andrei Bulatov.
\newblock On the number of finite {M}al'tsev algebras.
\newblock In {\em Contributions to general algebra, 13 (Velk{\'e} Karlovice,
  1999/Dresden, 2000)}, pages 41--54. Heyn, Klagenfurt, 2001.

\bibitem[FM87]{FM:CTFC}
Ralph Freese and Ralph McKenzie.
\newblock {\em Commutator Theory for Congruence Modular varieties}, volume 125
  of {\em London Math. Soc. Lecture Note Ser.}
\newblock Cambridge University Press, 1987.

\bibitem[HM88]{HM:TSOF}
David Hobby and Ralph McKenzie.
\newblock {\em The structure of finite algebras}, volume~76 of {\em
  Contemporary mathematics}.
\newblock American Mathematical Society, 1988.

\bibitem[HN15]{HN:LOPO}
G\'abor Horv\'ath and C.~L. Nehaniv.
\newblock Length of polynomials over finite groups.
\newblock {\em J. Comput. System Sci.}, 81(8):1614--1622, 2015.

\bibitem[HS12]{HS:EAES}
G\'abor Horv\'ath and Csaba Szab\'o.
\newblock Equivalence and equation solvability problems for the alternating
  group $\al a_4$.
\newblock {\em Journal of Pure and Applied Algebra}, 216(10):2170--2176, 2012.

\bibitem[Kar72]{Ka:RACP}
Richard~M. Karp.
\newblock Reducibility among combinatorial problems.
\newblock In {\em Complexity of Computer Computations ({P}roc. {S}ympos., {IBM}
  {T}homas {J}. {W}atson {R}es. {C}enter, {Y}orktown {H}eights, {N}.{Y}.},
  pages 85--103. Plenum, New York, 1972.

\bibitem[Kea99]{Ke:CMVW}
Keith~A. Kearnes.
\newblock Congruence modular varieties with small free spectra.
\newblock {\em Algebra Universalis}, 42(3):165--181, 1999.

\bibitem[KP01]{KP:PCIA}
Kalle Kaarli and Alden~F. Pixley.
\newblock {\em Polynomial completeness in algebraic systems}.
\newblock Chapman \& Hall / CRC, Boca Raton, Florida, 2001.

\bibitem[{Moo}16]{Mo:HCTF}
Andrew {Moorhead}.
\newblock Higher commutator theory for congruence modular varieties.
\newblock {\em ArXiv e-prints}, 2016.
\newblock https://arxiv.org/abs/1610.07087.

\bibitem[Opr16]{Op:ARDO}
Jakub Opr{\v{s}}al.
\newblock A relational description of higher commutators in {M}al'cev
  varieties.
\newblock {\em Algebra universalis}, 76(3):367--383, 2016.

\bibitem[Wir17]{Wi:OSA}
Alexander Wires.
\newblock On supernilpotent algebras.
\newblock {\em ArXiv e-prints}, 2017.
\newblock https://arxiv.org/abs/1701.08949.

\end{thebibliography}
\end{document}